\newtheorem{theorem}{Theorem}
\newtheorem{lemma}[theorem]{Lemma}
\newtheorem{proposition}[theorem]{Proposition}
\newtheorem{conjecture}[theorem]{Conjecture}
\renewcommand\int{\mathop{int}}
\newcommand{\etal}{\textit{et al.}}
\begin{document}

\title{Subdivision and Graph Eigenvalues}

\author{
Hitesh Kumar\thanks{Email: \small\tt hitesh\_kumar@sfu.ca},~
Bojan Mohar\thanks{Supported in part by the NSERC Discovery Grant R611450 (Canada),
and by the Research Project J1-2452 of ARRS (Slovenia). On leave from IMFM, Department of Mathematics, University of Ljubljana. Email: \small\tt mohar@sfu.ca}, ~
Shivaramakrishna Pragada\thanks{Email: \small\tt shivaramakrishna\_pragada@sfu.ca}, ~
Hanmeng Zhan\thanks{Email: \small\tt hanmeng\_zhan@sfu.ca, hanmengzhan@gmail.com}\\
\small Department of Mathematics\\[-0.8ex]
\small Simon Fraser University\\[-0.8ex]
\small Burnaby, BC \ V5A 1S6, Canada}

\date{}

\maketitle

\begin{abstract}
   This paper investigates the asymptotic nature of graph spectra when some edges of a graph are subdivided sufficiently many times. In the special case where all edges of a graph are subdivided, we find the exact limits of the $k$-th largest and $k$-th smallest eigenvalues for any fixed $k$.
   It is  expected that after subdivision, most eigenvalues of the new graph will lie in the interval $[-2,2]$. We examine the eigenvalues of the new graph outside this interval, and we prove several results that might be of independent interest. 
\end{abstract}

\noindent
Keywords: graph eigenvalues, subdivision, adjacency matrix, interval multiplicity, unimodality

\noindent
MSC: 05C50

\section{Introduction}
Let $G$ be a finite simple graph with vertex set $V(G)$ and edge set $E(G)$. The \emph{adjacency matrix} of $G$ is defined to be the matrix $A(G)=[a_{uv}]$ where $a_{uv}=1$ if $u$ and $v$ are adjacent, and $0$ otherwise. The \emph{spectrum} of $G$ is the set of eigenvalues of $A(G)$ which 
we denote by 
\[\lambda_1(G)\geq \lambda_2(G)\geq \cdots \geq \lambda_{n-1}(G)\geq \lambda_n(G),\]
where $n=|G|.$

In the seminal paper \cite{Jiang_2021}, Jiang \etal, gave an upper bound on the multiplicity of $\lambda_2(G)$, which is denoted by $m(\lambda_2, G)$. They used it to solve a long-standing conjecture about equiangular lines. In a follow-up article \cite{Haiman_2022}, Haiman \etal, came up with a construction which showed that the upper bound on $m(\lambda_2, G)$ would be sharp if the notion of multiplicity was relaxed to the `approximate second eigenvalue multiplicity.' This construction uses Cartesian product of graphs and then a subdivision of a subset of edges to show desired spectral properties. 

Subdivision is a quintessential operation in graph theory with applications to various graph problems. Its effect on the adjacency spectrum (in particular, on the spectral radius $\lambda_1$) was studied by Hoffmann and Smith \cite{hoffmann1974spectral} who proved the celebrated `Hoffmann-Smith Subdivision Theorem.' Hoffmann initiated the study of limit points of graph spectra, known as the Hoffman program, and subdivision occurs naturally in this context. We refer the reader to the articles \cite{Guo_2008, Hoffman_1972, Venkatesan_2019, Zhang_2006}. 

 The behaviour of the spectrum studied in relation to subdivision generally falls into two broad categories: either a single edge is subdivided, or all of the edges are subdivided. However, recently, subdividing a subset of edges has found application in generating graphs with high second eigenvalue multiplicities \cite{Haiman_2022}. In this article, our goal is to analyze the asymptotic behaviour of eigenvalues of a graph $G$ when a subset $S \subseteq E(G)$ is subdivided sufficiently many times. We achieve this by comparing the eigenvalues of two graph sequences $\{G_t(S)\}_{t\in \mathbb{N}}$ and $\{H_t(S)\}_{t\in \mathbb{N}}$ which we define below.  

Let $G$ be a graph and $S \subseteq E(G).$ For $t\geq 1$, we define $G_t = G_t(S)$ to be the graph obtained from $G$ by replacing each edge $uv\in S$ with a path $P_{uv}$ of length $t$. We call $P_{uv}$ the $t$-\emph{stretch} of $uv$. In particular, $G_1=G$. We define $H_t = H_t(S)$ to be the graph obtained from $G_{2t+1}(S)$ by removing the middle edge on each path $P_{uv}$, for all $uv\in S$. Note that
\[|G_t(S)| = |G| + (t-1)|S| \quad \textrm{and} \quad |H_t(S)| = |G_{2t+1}(S)| = |G| + 2t|S|.\]

 It makes sense to compare the spectrum of graphs in the sequence $\{G_t\}_{t\in \mathbb{N}}$ and $\{H_t\}_{t\in \mathbb{N}}$. The latter sequence has the property that $H_t$ is an induced subgraph of $H_{t+1}$ for all $t$, and hence this sequence is easy to work with. Note that $G_{2t+1}$ and $H_t$ differ only by $|S|$ edges. One would expect that as $t\to \infty$, the difference in finitely many edges has a negligible effect on eigenvalues, and therefore $G_t$ and $H_t$ are `almost' co-spectral. Our main result is that for any fixed $k$, the $k$-th largest (or smallest) eigenvalues of $G_t$ and $H_t$ are asymptotically equal. More precisely, we show the following.
 
\begin{theorem}\label{thm:main1}
For every fixed $k$, $\{\lambda_k(G_t)\}_{t\in \mathbb{N}}$ and $\{\lambda_k(H_t)\}_{t\in \mathbb{N}}$ are Cauchy sequences. Furthermore, \[\lim_{t\to \infty}  \lambda_k(G_t)=\lim_{t\to \infty} \lambda_k(H_t).\]
\end{theorem}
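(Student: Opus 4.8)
The plan is to reduce the theorem to comparisons between pairs of graphs that live on the \emph{same} vertex set and differ in only $O(1)$ edges buried deep inside long induced paths, where eigenvectors belonging to eigenvalues outside $[-2,2]$ decay exponentially. We may assume $S\neq\varnothing$ (otherwise $G_t=H_t=G$), and we put $\Delta=\max\{\Delta(G),2\}$; since subdivision, pendant paths and edge deletions never raise the maximum degree, all eigenvalues in sight lie in $[-\Delta,\Delta]$. First, the $H$-side is easy: $H_t$ is obtained from $H_{t+1}$ by deleting the $2|S|$ far endpoints of the pendant paths, so $H_t$ is an induced subgraph of $H_{t+1}$, and Cauchy interlacing gives $\lambda_k(H_t)\le\lambda_k(H_{t+1})$; being monotone and bounded, $\{\lambda_k(H_t)\}$ is Cauchy with a limit $L$. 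Moreover $H_t$ contains an induced path on $t$ vertices, so $\lambda_k(H_t)\ge 2\cos\frac{k\pi}{t+1}$ and hence $L\ge 2$; likewise $G_t$ contains an induced path on $t-1$ vertices, so $\liminf_t\lambda_k(G_t)\ge 2$.

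The main tool is an elementary decay estimate. Fix $\delta>0$; there are $C=C(\Delta)$ and $\rho_\delta>1$ such that if $x$ is a unit $\mu$-eigenvector of the adjacency matrix of a graph of maximum degree $\le\Delta$ with $|\mu|\ge 2+\delta$, and $Q=v_0v_1\cdots v_m$ is a path in that graph with $\deg(v_i)=2$ for $0<i<m$, then $|x_{v_j}|\le C\rho_\delta^{-\min(j,\,m-j)}$ for all $j$, and if moreover $v_m$ has degree $1$ then also $|x_{v_j}|\le C\rho_\delta^{-j}$. Indeed, writing $a_j=x_{v_j}$, the eigenvalue equation at internal vertices forces $a_j=\alpha\rho^j+\beta\rho^{-j}$ with $\rho$ the root of $z^2-\mu z+1=0$ of modulus $>1$ (which satisfies $|\rho|\ge\rho_\delta$ since $|\mu|\ge2+\delta$); the boundary data---$a_0,a_m$ are coordinates of the unit vector $x$, respectively $a_{m-1}=\mu a_m$ at a degree-$1$ end---control $|\alpha|,|\beta|$ and give the stated bounds. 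In particular the eigenvector entry is $O(\rho_\delta^{-t})$ at the free end of a length-$t$ pendant path and at any midpoint vertex of a length-$(2t$ or $2t+1)$ path in our graphs.

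Now identify $V(G_{2t+1})=V(H_t)$ naturally, so that $A(G_{2t+1})=A(H_t)+P_t$ where $P_t$ is the adjacency matrix of the $|S|$ disjoint removed edges, each joining the two free ends of a pair of pendant paths. If $U$ is the span of the top $k$ eigenvectors of $A(H_t)$ (resp.\ of $A(G_{2t+1})$), then for unit $y\in U$ one has $|y^\top P_ty|\le 2|S|\,k\,\max\{x_w^2\}$, the maximum over those $k$ eigenvectors $x$ and the $2|S|$ endpoints $w$ of the removed edges; by the Courant--Fischer characterization of $\lambda_k$ applied in both directions, $|\lambda_k(G_{2t+1})-\lambda_k(H_t)|$ is at most the larger of the two resulting bounds. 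Those endpoints are free ends of length-$t$ pendant paths in $H_t$ and midpoint vertices of length-$(2t+1)$ paths in $G_{2t+1}$, so \emph{provided} the eigenvectors involved all belong to eigenvalues $\ge 2+\delta$, the decay estimate makes this $O(\rho_\delta^{-2t})$; an analogous comparison relates $A(G_{2t})$ to the adjacency matrix of $H_{t-1}$ together with $|S|$ isolated vertices, the correction being supported near the midpoints. If $L>2$, take $\delta=\tfrac{L-2}{2}$: for large $t$ the top $k$ eigenvalues of $H_t$ exceed $2+\delta$, so the comparison gives $\lambda_k(G_{2t+1})\ge\lambda_k(H_t)-O(\rho_\delta^{-2t})$ and hence $\liminf_t\lambda_k(G_{2t+1})\ge L$; then the top $k$ eigenvalues of $G_{2t+1}$ eventually exceed $2+\tfrac{L-2}{4}$ as well, the comparison run the other way supplies the matching upper bound, and $\lambda_k(G_{2t+1})\to L$. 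The same reasoning with the other comparison (and $\lambda_k$ of ``$H_{t-1}$ plus isolated vertices'' equal to $\lambda_k(H_{t-1})$ once positive) gives $\lambda_k(G_{2t})\to L$, so $\lambda_k(G_t)\to L=\lim_t\lambda_k(H_t)$ and $\{\lambda_k(G_t)\}$ is Cauchy.

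The remaining case $L=2$ is the only real obstacle, since there the relevant eigenvectors need not decay exponentially and $\lambda_k(G_{2t+1})$ cannot be compared with $\lambda_k(H_t)$ head-on. Here I use the soft bound $\liminf_t\lambda_k(G_t)\ge 2$ from the first paragraph, which reduces the claim to showing $\limsup_t\lambda_k(G_t)\le 2$. If this failed, $\lambda_k(G_t)\ge 2+2\epsilon$ for some $\epsilon>0$ and infinitely many $t$; for those $t$ the top $k$ eigenvectors of $G_t$ have eigenvalue $\ge 2+2\epsilon$, so by the decay estimate their values near the midpoint of every stretched path are $O(\rho_\epsilon^{-t})$. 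Feeding this into the comparison above (with $t=2s+1$, say) gives $\lambda_k(H_s)\ge\lambda_k(G_{2s+1})-O(\rho_\epsilon^{-2s})\ge 2+\epsilon$ for $s$ large, contradicting $\lambda_k(H_s)\to 2$; the case $t=2s$ is the same using the ``$H_{s-1}$ plus isolated vertices'' comparison. Hence $\lambda_k(G_t)\to 2=L$ in this case too, so in all cases $\{\lambda_k(G_t)\}$ is Cauchy with limit $L=\lim_t\lambda_k(H_t)$. Finally, the statement for the $k$-th smallest eigenvalues follows by running the entire argument with $-A(\cdot)$ in place of $A(\cdot)$, whose restriction to any path obeys the very same recurrence. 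The hard part is thus exactly the threshold $L=2$; everything else is exponential decay on long paths plus Courant--Fischer bookkeeping.
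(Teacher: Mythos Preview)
Your argument is correct and follows the same overall strategy as the paper: monotonicity of $\lambda_k(H_t)$ via interlacing, the lower bound $L\ge 2$ from induced paths, exponential decay of eigenvectors along long internal/pendant paths, a Courant--Fischer comparison of two adjacency matrices that differ in $O(1)$ entries sitting deep inside those paths, and the case split $L>2$ versus $L=2$ (the latter handled by contradiction). The paper's Exponential Decay Lemma is your decay estimate, and the paper's Lemmas~\ref{lem:c1}--\ref{lem:c2} play the role of your ``comparison run in one direction''.

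The implementation differs in two places. First, the paper pairs $H_t$ with $G_{2t+2}$, exploiting that $H_t$ is an \emph{induced subgraph} of $G_{2t+2}$; this gives $\lambda_k(G_{2t+2})\ge\lambda_k(H_t)$ for free by interlacing, so only the reverse inequality needs decay plus Courant--Fischer. You instead pair $H_t$ with $G_{2t+1}$ (same vertex set, $A(G_{2t+1})=A(H_t)+P_t$) and must run Courant--Fischer in both directions, which forces the small bootstrap: first use decay on the $H_t$-side to get $\liminf\lambda_k(G_{2t+1})\ge L$, then use that to activate decay on the $G_{2t+1}$-side. Second, the paper reduces odd stretches $G_{2t+3}$ to the even case for an auxiliary graph $G'$ (subdivide each edge of $S$ once, then note $H_t\subset H_t'\subset H_{t+1}$), whereas you treat $G_{2t}$ directly via $H_{t-1}\cup |S|K_1$. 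Your route is a bit more symmetric and avoids the auxiliary $G'$; the paper's route buys one inequality for free from interlacing and so needs decay only once. Both are clean and equally short.
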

The same result holds for the $k$-th smallest eigenvalues of these graphs. 
\begin{theorem}\label{thm:main2}
 For every fixed $k$, $\{\lambda_{|G_t|-k+1}(G_{t})\}_{t\in \mathbb{N}}$ and $\{\lambda_{|H_t|-k+1}(H_t)\}_{t\in \mathbb{N}}$ are Cauchy sequences. Furthermore, \[\lim_{t\to \infty}  \lambda_{|G_t|-k+1}(G_t)=\lim_{t\to \infty}  \lambda_{|H_t|-k+1}(H_t).\]
\end{theorem}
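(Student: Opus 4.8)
The plan is to mirror the proof of Theorem~\ref{thm:main1}, running every inequality in the reverse direction; I describe it in that language, assuming $S\neq\emptyset$ (otherwise there is nothing to prove). The easy half is the $H_t$-sequence: since $H_t$ is an induced subgraph of $H_{t+1}$, Cauchy interlacing makes $\{\lambda_{|H_t|-k+1}(H_t)\}_t$ non-increasing, and it is bounded below because every adjacency eigenvalue lies in $[-\Delta(G),\Delta(G)]$, a bound independent of $t$. Hence it converges (so it is Cauchy); write $L$ for its limit. Moreover, the interior of a pendant leg of $H_t$ is an induced path $P_t$, so interlacing gives $\lambda_{|H_t|-k+1}(H_t)\le-2\cos(\pi k/(t+1))\to-2$, whence $L\le-2$.

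The heart of the matter is a localization estimate on long stretches. Fix $\delta>0$, and write a prospective eigenvalue as $\lambda=-2\cosh\theta\le-2-\delta$, so $\theta\ge\theta(\delta)>0$. The eigenvalue equation $x_{i-1}+x_{i+1}=\lambda x_i$ along a degree-two path has characteristic roots $-e^{\pm\theta}$; solving it on a pendant leg of length $t$ (using the degree-one condition at the tip) or on an internal $t$-stretch (using the two endpoint values as data) shows that every unit eigenvector of $A(H_t)$ or of $A(G_{2t+1})$ with eigenvalue $\le-2-\delta$ has its coordinates at the tips of the pendant legs of $H_t$, equivalently at the midpoints of the internal $t$-stretches of $G_{2t+1}$, bounded by $C(\delta)\rho^t$, where $\rho=\rho(\delta):=e^{-\theta(\delta)}<1$. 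Since $A(G_{2t+1})=A(H_t)+M_t$ with $M_t$ the adjacency matrix of the $|S|$ deleted middle edges --- supported precisely on those coordinates --- this gives $\|M_t x\|\le C'(\delta)\rho^t\|x\|$ whenever $x$ lies in the span of at most $k$ eigenvectors, of either graph, with eigenvalues $\le-2-\delta$.

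Next I would squeeze $\lambda_{|G_{2t+1}|-k+1}(G_{2t+1})$. If $L<-2$, fix $\delta\in(0,\tfrac13(-2-L))$, so that eventually $\lambda_{|H_t|-k+1}(H_t)<-2-2\delta$; taking the span $W$ of the $k$ bottom eigenvectors of $A(H_t)$ (all with eigenvalue $\le-2-2\delta$), the localization estimate and the min--max formula give
\[\lambda_{|G_{2t+1}|-k+1}(G_{2t+1})\ \le\ \max_{x\in W,\,\|x\|=1}\langle A(G_{2t+1})x,x\rangle\ \le\ \lambda_{|H_t|-k+1}(H_t)+C'\rho^t,\]
which stays below $-2-\delta$; repeating the computation with $G_{2t+1}$ and $H_t$ interchanged yields the opposite inequality, so $|\lambda_{|G_{2t+1}|-k+1}(G_{2t+1})-\lambda_{|H_t|-k+1}(H_t)|=O(\rho^t)\to0$. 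If $L=-2$, the same one-sided estimate shows that $\lambda_{|G_{2t+1}|-k+1}(G_{2t+1})\le-2-2\delta$ along a subsequence would force $\lambda_{|H_t|-k+1}(H_t)\le-2-\delta$ there, which is impossible for small $\delta$; hence $\liminf_t\lambda_{|G_{2t+1}|-k+1}(G_{2t+1})\ge-2$, and the matching $\limsup\le-2$ holds because $G_{2t+1}$ contains $|S|$ vertex-disjoint induced paths $P_{2t}$, which together supply more than $k$ eigenvalues below $-2+\eta$ for every fixed $\eta>0$. In all cases $\lim_t\lambda_{|G_{2t+1}|-k+1}(G_{2t+1})=L$.

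Finally, to handle all $t$ rather than only odd $t$: $G_{t+1}$ is obtained from $G_t$ by inserting one vertex at the midpoint of each $t$-stretch, so extending a low-lying eigenvector of $G_t$ by $0$ at the new vertices produces an approximate eigenvector of $G_{t+1}$ with residual $O(\rho^{t/2})$ (the midpoint coordinates being exponentially small, as above), and conversely; thus $\{\lambda_{|G_t|-k+1}(G_t)\}_t$ is Cauchy --- again with a squeeze if its limit is $-2$ --- and its limit equals that of the subsequence $t=2s+1$, namely $L=\lim_s\lambda_{|H_s|-k+1}(H_s)$. The main obstacle is precisely the step I called ``squeezing'': turning the purely one-sided residual bound $\|M_tx\|=O(\rho^t)\|x\|$ on low-lying eigenvectors into a genuine two-sided, uniform-in-$t$ comparison of the $k$-th smallest eigenvalues, and in particular coping with the borderline case where the common limit equals $-2$, the localization rate degenerates ($\rho\to1$), and one must instead squeeze between $-2$ and the eigenvalues furnished by long induced subpaths.
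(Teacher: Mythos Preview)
Your proposal is correct and follows the same overall strategy as the paper---exponential decay of eigenvector entries along long paths, combined with Courant--Fischer min--max over the span of the bottom $k$ eigenvectors---but you make two tactical choices that differ from the paper's proof.

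First, the paper compares $G_{2t+2}$ with $H_t$, exploiting the fact that $H_t$ is an \emph{induced subgraph} of $G_{2t+2}$; this makes one inequality ($\lambda_{|G_{2t+2}|-k+1}(G_{2t+2})\le \lambda_{|H_t|-k+1}(H_t)$) automatic from interlacing, so the decay argument only needs to run in one direction (on eigenvectors of $G_{2t+2}$ along the internal stretches). You instead compare $G_{2t+1}$ with $H_t$, which have the same vertex set but differ by $|S|$ edges; this forces you to run the decay bound in both directions---once on eigenvectors of $G_{2t+1}$ along the internal paths, once on eigenvectors of $H_t$ along the pendant legs. Both directions are fine (the pendant-leg decay is a direct consequence of the paper's \autoref{prop:eigenvector} with $k=2$), but the paper's choice saves half the work.

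Second, for the parity step the paper uses a clean reduction: subdivide each edge of $S$ once to form $G'$ with edge set $S'$, apply the already-established even case to $G'$, and then observe that $H_t(S)\subset H_t(S')\subset H_{t+1}(S)$ as induced subgraphs, so interlacing sandwiches the limits. Your direct comparison of $G_t$ with $G_{t+1}$ via extension-by-zero at the inserted midpoint vertices also works, but you should make explicit that the restricted (or extended) vectors remain nearly orthonormal (you only perturb coordinates of size $O(\rho^{t/2})$), and that in the boundary case $L=-2$ a hypothetical even-$t$ term below $-2-2\delta$ would, via one step of your extension argument, force the adjacent odd-$t$ term below $-2-\delta$, contradicting the odd-subsequence limit. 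Your handling of the $L=-2$ case for the odd subsequence is exactly the content of the paper's analogue of \autoref{lem:c1} on the negative side.
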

 
As an application of the above results, we find the limits of the $k$-th largest and $k$-th smallest eigenvalues of $G$ when all of its edges are subdivided sufficiently many times. 
\begin{theorem}\label{thm:main3}
Let $G$ be a graph on $n$ vertices with degree sequence $(d_1, d_2, \ldots, d_n)$ where $d_1\geq  d_2\geq  \cdots \geq d_n$. Let $G_t = G_t(E(G))$.  Then for any fixed $k,$
    \[\lim_{t\to \infty} \lambda_k(G_t) =-\lim_{t\to \infty} \lambda_{|G_t|-k+1}(G_t)=
    \begin{cases}
     \frac{d_k}{\sqrt{d_k-1}} \text{ if } 1\leq k \leq n \text{ and } d_k\geq 3,\\
     2 \text{ otherwise.}
    \end{cases}\]
\end{theorem}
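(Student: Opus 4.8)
The plan is to combine Theorems~\ref{thm:main1} and~\ref{thm:main2} with a direct analysis of the graphs $H_t := H_t(E(G))$, whose structure is especially transparent. Replacing every edge by a path of length $2t+1$ and then deleting the middle edge of each such path leaves no two original vertices of $G$ in a common component, so $H_t$ is the disjoint union, over $v\in V(G)$, of the ``spiders'' $S_{d_v,t}$, where $S_{d,t}$ denotes the tree consisting of a center with $d$ pendant paths of length $t$, and $d_v=\deg_G(v)$. Each $H_t$ is thus a forest, hence bipartite, so its spectrum is symmetric about $0$ and $\lambda_{|H_t|-k+1}(H_t) = -\lambda_k(H_t)$; together with Theorems~\ref{thm:main1} and~\ref{thm:main2} this reduces the entire statement to proving that $\lim_{t\to\infty}\lambda_k(H_t)$ equals $d_k/\sqrt{d_k-1}$ when $d_k\ge 3$ and equals $2$ otherwise.

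Everything therefore comes down to the spectrum of a single spider $S_{d,t}$. Expanding the characteristic polynomial at the center gives
\[
 p_{S_{d,t}}(x) = p_{P_t}(x)^{\,d-1}\bigl(x\,p_{P_t}(x) - d\,p_{P_{t-1}}(x)\bigr),
\]
where $p_{P_t}$ is the characteristic polynomial of the path on $t$ vertices; the factor $p_{P_t}(x)^{d-1}$ contributes only eigenvalues in $(-2,2)$ (the antisymmetric modes, with multiplicity $d-1$). For the other factor I would substitute $x=\mu+\mu^{-1}$ with $\mu>1$ and use $p_{P_t}(\mu+\mu^{-1}) = (\mu^{t+1}-\mu^{-t-1})/(\mu-\mu^{-1})$; a short manipulation turns the condition ``$x\,p_{P_t}(x)-d\,p_{P_{t-1}}(x)=0$ with $x>2$'' into
\[
 \sinh\bigl((t+2)s\bigr) = (d-1)\sinh(ts),\qquad s:=\log\mu>0 .
\]
When $d\le 2$ the left-hand side always exceeds the right, so there is no root above $2$; since $S_{d,t}$ is then simply a path, $\lambda_1(S_{d,t})\to 2$. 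When $d\ge 3$, the ratio $\sinh((t+2)s)/\sinh(ts)$ is strictly increasing in $s$ (from $1+2/t$ to $+\infty$), so the displayed equation has a unique root $s_t>0$; since this ratio decreases to $e^{2s}$ as $t\to\infty$ for each fixed $s$, one gets $s_t\to\tfrac12\log(d-1)$, whence $\lambda_1(S_{d,t}) = 2\cosh s_t \to \sqrt{d-1}+\tfrac1{\sqrt{d-1}} = \tfrac{d}{\sqrt{d-1}}$. Finally, for every fixed $j\ge 2$ we have $\lambda_j(P_t)\le\lambda_j(S_{d,t})\le\lambda_2(S_{d,t})$ (a single leg is an induced $P_t$, and $S_{d,t}$ minus its center is a disjoint union of paths, so Cauchy interlacing applies at both ends), and since both bounds tend to $2$, we get $\lambda_j(S_{d,t})\to 2$.

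It remains to merge these spectra. For $t$ large, the eigenvalues of $H_t=\bigsqcup_v S_{d_v,t}$ that exceed $2$ are exactly the Perron values $\lambda_1(S_{d_v,t})$ with $d_v\ge 3$, one for each such vertex; moreover $\lambda_1(S_{d_v,t})>\lambda_1(S_{d_w,t})$ whenever $d_v>d_w$ (Perron--Frobenius, since $S_{d_w,t}\subsetneq S_{d_v,t}$ and the larger spider is connected), and $d\mapsto d/\sqrt{d-1}$ is strictly increasing for $d\ge 2$. All remaining eigenvalues of $H_t$ lie below $2$, and at least $k$ of them exceed $2-\varepsilon$ once $t$ is large (for instance $\lambda_2(S_{d_v,t}),\dots,\lambda_{k+1}(S_{d_v,t})$ for a fixed vertex $v$ of positive degree, using the sandwich above). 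Sorting in decreasing order then yields $\lambda_k(H_t)=\lambda_1(S_{d_k,t})\to d_k/\sqrt{d_k-1}$ when $d_k\ge 3$, and $\lambda_k(H_t)\to 2$ when $d_k\le 2$ or $k>n$; combined with the reduction of the first paragraph this proves the theorem.

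I expect the one genuinely delicate point to be the transcendental equation: establishing that $\sinh((t+2)s)=(d-1)\sinh(ts)$ has a unique positive solution and pinning down its limit, via the monotonicity of the ratio $\sinh((t+2)s)/\sinh(ts)$ in $s$ and its monotone decrease to $e^{2s}$ in $t$. The identification of $H_t$ as a disjoint union of spiders, the interlacing estimates for the other eigenvalues, and the treatment of the $k$-th smallest eigenvalues (which costs nothing thanks to bipartiteness) should all be routine by comparison.
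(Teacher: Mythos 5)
Your proposal is correct, and its overall skeleton is exactly the paper's: identify $H_t(E(G))$ as the disjoint union of the spiders $(K_{1,d_v})_t$, use bipartiteness for the symmetry between largest and smallest eigenvalues, invoke Theorems \ref{thm:main1} and \ref{thm:main2} to transfer limits from $H_t$ to $G_t$, and handle the eigenvalues beyond the Perron values by the interlacing sandwich $\lambda_j(P_t)\le\lambda_j((K_{1,d})_t)\le\lambda_1(P_t)$ for fixed $j\ge 2$ (this is \autoref{prop:spiderlimits} and the closing paragraph of \autoref{section:applications}). The one substantive difference is how you compute $\lim_t\lambda_1$ of the spider (\autoref{lemma:spider}). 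The paper passes to the weighted-path quotient, whose characteristic polynomial is $x\phi(P_t,x)-d\phi(P_{t-1},x)$, and then uses the walk-generating-function machinery (\autoref{thm:ratfun}, \autoref{lem:ratlim}) to show $\phi(P_{t-1},x)/\phi(P_t,x)\to\tfrac12\bigl(x-\sqrt{x^2-4}\bigr)$ on $(2,\infty)$, finally solving $x/d=\tfrac12\bigl(x-\sqrt{x^2-4}\bigr)$. You instead expand the characteristic polynomial at the center, $\phi(S_{d,t},x)=\phi(P_t,x)^{d-1}\bigl(x\phi(P_t,x)-d\phi(P_{t-1},x)\bigr)$ (so the relevant factor is the same polynomial), and substitute $x=2\cosh s$ to turn the root condition into $\sinh((t+2)s)=(d-1)\sinh(ts)$; your asymptotic analysis of this equation is sound, since $s\mapsto\sinh((t+2)s)/\sinh(ts)$ is strictly increasing (because $v\coth v$ is increasing) and decreases in $t$ to $e^{2s}$, giving $s_t\uparrow\tfrac12\log(d-1)$ and $2\cosh s_t\to d/\sqrt{d-1}$. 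Your route is more elementary and self-contained (it avoids the generating-function apparatus and the formal-power-series-to-pointwise-limit step in \autoref{lem:ratlim}), at the price of an explicit transcendental-equation analysis, whereas the paper's ratio-limit lemma is a reusable general tool. Two harmless caveats: the root above $2$ exists only once $d-1>1+2/t$, i.e.\ for $t$ large, which is all the limit requires; and your "otherwise'' case tacitly uses a vertex of positive degree, i.e.\ that $G$ has at least one edge --- an assumption the paper's own argument needs as well.
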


In our proofs of the main results, we use the key observation that on `long' internal paths, the eigenvectors corresponding to `large' eigenvalues of the graph decay exponentially. In fact, we prove a more general result in \autoref{prop:eigenvector}. Moreover, we show that for eigenvalues outside $[-2,2]$, the absolute values of their eigenvector entries on the internal path are unimodal (\autoref{lemma:unimodal}). Similar results have been obtained previously for the principal eigenvector (see, for example, \cite{Tobin_2018} and \cite{Xue_2020}).

It is clear that for large $t$, most eigenvalues of $G_t$ will lie in the interval $[-2,2]$. So, the eigenvalues of interest are the ones lying outside this interval. In this article, we also explore the effect of subdivision on the number of eigenvalues outside $[-2,2]$.

The paper is organized as follows. In \autoref{prelims}, we collect some well-known theorems from the spectral theory of matrices. We discuss the eigenvector structure of `large' eigenvalues in \autoref{section:large_eigenvector}. The next section is dedicated to proving our main results, Theorems \ref{thm:main1} and \ref{thm:main2}. We prove \autoref{thm:main3} in \autoref{section:applications}. Finally, in \autoref{section:subdivision_multiplicity}, we look at the number of eigenvalues outside $[-2,2].$ and conclude the paper with some open problems.

\section{Preliminaries}\label{prelims}
We use standard graph terminology and notation throughout the paper. Here we recall some well-known results that we will use frequently. For reference, see Theorems 4.2.6, 4.3.28 and 8.4.4 of the book \cite{Horn_Johnson_2013} by Horn and Johnson.

\begin{theorem}[Courant-Fischer]
    Let $A$ be an $n\times n$ Hermitian matrix with eigenvalues $\lambda_1\ge \lambda_2\ge \cdots \ge \lambda_n$. Let $U$ denote a subspace of $\mathbf{C}^n$. Then for $1\leq k \leq n,$ 
    \[ \lambda_k = \max_{\substack{U\\ \dim(U)=k}} ~ \min_{\substack{z\in U\\||z||=1}} \langle z, A z\rangle 
    = \max_{\substack{U\\ \dim(U)=k}} \min_{z\in U} \frac{z^TAz}{z^Tz},
    \]
    and 
   \[\lambda_k =\min_{\substack{U\\ \dim(U)=n-k+1}}~ \max_{\substack{z\in U\\||z||=1}} \langle z, Az\rangle 
   = \min_{\substack{U\\ \dim(U)=n-k+1}} \max_{z\in U} \frac{z^TAz}{z^Tz}.\] 
\end{theorem}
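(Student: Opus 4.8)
The plan is to derive both characterizations from the spectral theorem for Hermitian matrices. First I would fix an orthonormal eigenbasis $v_1,\dots,v_n$ of $\mathbf{C}^n$ with $Av_i=\lambda_i v_i$, ordered so that $\lambda_1\ge\cdots\ge\lambda_n$, and record the elementary identities $\langle z,Az\rangle=\sum_i \lambda_i|c_i|^2$ and $\langle z,z\rangle=\sum_i|c_i|^2$ whenever $z=\sum_i c_i v_i$. Everything then reduces to comparing weighted averages of the $\lambda_i$; in particular $\langle z,Az\rangle$ is real, so the inner maximum and minimum make sense.

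For the max--min formula I would prove the two inequalities separately. For ``$\ge$'', take the specific $k$-dimensional test space $U_0=\mathrm{span}(v_1,\dots,v_k)$: any unit vector there has Rayleigh quotient $\sum_{i\le k}\lambda_i|c_i|^2\ge\lambda_k\sum_{i\le k}|c_i|^2=\lambda_k$, so the maximum over all $k$-dimensional $U$ of the inner minimum is at least $\lambda_k$. For ``$\le$'', given an arbitrary $k$-dimensional $U$, set $W=\mathrm{span}(v_k,v_{k+1},\dots,v_n)$, which has dimension $n-k+1$; since $\dim U+\dim W=n+1>n$, there is a nonzero $z\in U\cap W$, and $z$ only involves $\lambda_k,\dots,\lambda_n$, so its Rayleigh quotient is at most $\lambda_k$. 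Hence the inner minimum over $U$ is at most $\lambda_k$ for every such $U$, and taking the maximum over $U$ gives ``$\le$''.

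To get the min--max formula, I would apply the identity just proved to $-A$, whose eigenvalues listed in decreasing order are $-\lambda_n\ge\cdots\ge-\lambda_1$, so its $(n-k+1)$-st largest eigenvalue equals $-\lambda_k$; rewriting $\max_{\dim U=k}\min_{z\in U}\langle z,-Az\rangle=-\min_{\dim U=k}\max_{z\in U}\langle z,Az\rangle$ and re-indexing $k\mapsto n-k+1$ yields the second formula. Alternatively one can repeat the dimension-counting argument directly, swapping the roles of the top and bottom eigenspaces.

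The only point that needs care is the dimension count $\dim(U\cap W)\ge 1$: this is precisely where the Hermitian hypothesis enters, since it guarantees a full orthonormal eigenbasis and hence that $W=\mathrm{span}(v_k,\dots,v_n)$ genuinely has dimension $n-k+1$ even when eigenvalues repeat. Beyond that I expect no real obstacle — the statement is a clean consequence of the spectral theorem together with the fact that two subspaces of a finite-dimensional space whose dimensions sum to more than the ambient dimension must intersect nontrivially.
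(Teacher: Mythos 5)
Your argument is correct: it is the standard spectral-theorem proof (orthonormal eigenbasis, the test space $\mathrm{span}(v_1,\dots,v_k)$ for the lower bound, and the dimension count $\dim U+\dim W>n$ forcing a nontrivial intersection for the upper bound), and your derivation of the min--max form by applying the max--min form to $-A$ at index $n-k+1$ is sound; the unnormalized Rayleigh-quotient versions follow by homogeneity. The paper does not prove this statement at all --- it is quoted in the preliminaries with a citation to Horn and Johnson --- and your proof is essentially the canonical one found there, so there is nothing to reconcile.
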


\begin{theorem}[Interlacing]
    Let $A$ be an $n\times n$ Hermitian matrix, with eigenvalues $\lambda_1 \ge \lambda_2 \ge \cdots \ge \lambda_n$. Let $B$ be an $m\times m$ principal submatrix of $A$, with eigenvalues $\theta_1\ge \theta_2\ge \cdots \ge \theta_m$. Then
    \[\lambda_i \ge \theta_i \ge \lambda_{i+n-m}.\]
\end{theorem}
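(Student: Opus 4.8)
The plan is to derive both inequalities from the Courant--Fischer theorem stated above. First I would reduce to the case where $B$ is the leading principal $m\times m$ submatrix of $A$: an arbitrary principal submatrix is obtained from this one by a permutation similarity applied to $A$, which changes neither the spectrum of $A$ nor that of $B$. With $B$ in this position, let $P\colon \mathbf{C}^m\to\mathbf{C}^n$ be the isometric inclusion $z\mapsto (z,0)$; then $B=P^{*}AP$, $\|Pz\|=\|z\|$, and $\langle Pz, A\,Pz\rangle=\langle z, Bz\rangle$ for all $z\in\mathbf{C}^m$.

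For the upper bound $\theta_i\le\lambda_i$, I would take a subspace $U\subseteq\mathbf{C}^m$ with $\dim U=i$ achieving the maximum in the first Courant--Fischer formula for $\theta_i$. Since $P$ is injective, its image $PU$ is an $i$-dimensional subspace of $\mathbf{C}^n$, and the identity above yields $\min_{0\ne z\in U}\frac{\langle z,Bz\rangle}{\langle z,z\rangle}=\min_{0\ne w\in PU}\frac{\langle w,Aw\rangle}{\langle w,w\rangle}$. The right-hand side is at most the maximum of this Rayleigh quotient over all $i$-dimensional subspaces of $\mathbf{C}^n$, which equals $\lambda_i$, so $\theta_i\le\lambda_i$.

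For the lower bound $\theta_i\ge\lambda_{i+n-m}$, I would apply the inequality just proved to the Hermitian matrices $-A$ and $-B$, whose eigenvalues in decreasing order are $-\lambda_n\ge\cdots\ge-\lambda_1$ and $-\theta_m\ge\cdots\ge-\theta_1$; thus $-B$ is a leading principal submatrix of $-A$, the $j$-th largest eigenvalue of $-A$ is $-\lambda_{n-j+1}$, and that of $-B$ is $-\theta_{m-j+1}$. The upper bound gives $-\theta_{m-j+1}\le -\lambda_{n-j+1}$ for $1\le j\le m$; substituting $i=m-j+1$ turns this into $\theta_i\ge\lambda_{i+n-m}$. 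Alternatively, one can run the same embedding argument directly with the second, $\min$--$\max$ form of Courant--Fischer.

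I do not anticipate a serious obstacle, since the result is classical. The only points demanding care are the bookkeeping of the index shift $n-m$ in the second half and checking that the inclusion $P$ genuinely carries Rayleigh quotients on $\mathbf{C}^m$ to those on the coordinate subspace of $\mathbf{C}^n$, together with $\dim PU=\dim U$, which holds because $P$ is injective.
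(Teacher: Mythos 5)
Your proof is correct: the reduction to a leading principal submatrix, the embedding $P$ turning Rayleigh quotients of $B$ into Rayleigh quotients of $A$ on the coordinate subspace, and the passage to $-A$, $-B$ with the index shift $i=m-j+1$ all check out. The paper itself gives no proof of this statement --- it is quoted as a classical result from Horn and Johnson --- and your argument is precisely the standard Courant--Fischer proof found there, so there is nothing further to reconcile.
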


\begin{theorem}[Perron-Frobenius]
Let $A$ be an $n\times n$ ($n\geq 2$) non-negative irreducible matrix. Then there exists a real number $\rho>0$ such that the following statements hold.
\begin{enumerate}[$(i)$]
    \item $\rho$ is a simple eigenvalue of $A.$
    \item $\rho$ has a positive eigenvector (which is called Perron-vector or principal eigenvector).
    \item $\rho=\max \{|\lambda|: \lambda \text{ is an eigenvalue of $A$}\}.$
\end{enumerate}
\end{theorem}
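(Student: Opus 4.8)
The approach I would take is the classical Collatz--Wielandt argument. For a vector $x\ge 0$ with $x\ne 0$, set
\[
r(x)\;=\;\min_{i\,:\,x_i>0}\frac{(Ax)_i}{x_i}\;=\;\max\{\,s\ge 0:\ Ax\ge s x\,\},
\]
and let $\rho=\sup\{r(x):x\ge 0,\ x\ne 0\}=\sup\{r(x):x\in\Delta\}$, where $\Delta=\{x\ge 0:\sum_i x_i=1\}$. I claim this $\rho$ is the asserted number. The one device that drives the whole proof is that, since $A$ is non-negative and irreducible, the matrix $M:=(I+A)^{n-1}$ is \emph{strictly} positive; moreover $M$ commutes with $A$, and from $Ax\ge r(x)x$ one gets $A(Mx)\ge r(x)(Mx)$, so $r(Mx)\ge r(x)$ for every admissible $x$.

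First I would show the supremum is attained at a positive eigenvector. Because $r(Mx)\ge r(x)$ and $r$ is scale-invariant, $\sup_\Delta r$ equals the supremum of $r$ over $M\Delta$, which is a compact subset of the open positive orthant; on that set $r$ is continuous (its defining minimum runs over all coordinates), so $\rho=r(x^\ast)$ for some $x^\ast>0$, with $Ax^\ast\ge\rho x^\ast$. If this inequality were strict somewhere, then $Ax^\ast-\rho x^\ast\ge 0$ is nonzero, hence $M(Ax^\ast-\rho x^\ast)>0$; writing $z=Mx^\ast>0$ and using commutativity this reads $Az>\rho z$, so $r(z)>\rho$, contradicting the definition of $\rho$. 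Thus $Ax^\ast=\rho x^\ast$ with $x^\ast>0$, proving $(ii)$; and $\rho>0$ because irreducibility forces every row of $A$ to be nonzero, so $Ax^\ast>0$.

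Next, $(iii)$: if $Av=\lambda v$ with $v\ne 0$, then $|\lambda|\,|v_i|=|(Av)_i|\le (A|v|)_i$ by the triangle inequality and $A\ge 0$, so $A|v|\ge|\lambda|\,|v|$, whence $|\lambda|\le r(|v|)\le\rho$; since $\rho$ is itself an eigenvalue, $\rho=\max\{|\lambda|\}$. For $(i)$ I would prove simplicity in two stages. Geometric multiplicity one: if $Av=\rho v$ with $v$ real, then $A|v|\ge\rho|v|$, and the argument of the previous paragraph (apply $M$, invoke maximality of $\rho$) forces $A|v|=\rho|v|$, hence $|v|=(1+\rho)^{-(n-1)}M|v|>0$; subtracting $Av=\rho v$ shows $|v|-v$ and $|v|+v$ are non-negative $\rho$-eigenvectors, so each is either $0$ or strictly positive, leaving only $v>0$ or $v<0$; therefore any two real $\rho$-eigenvectors are proportional (a suitable linear combination has a zero entry and so must vanish), and the eigenspace is one-dimensional. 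Algebraic multiplicity one: applying the part already proved to $A^{T}$ (also non-negative irreducible) gives a strictly positive left eigenvector $u>0$ with $u^{T}A=\rho u^{T}$; a Jordan block of size $\ge 2$ would provide $w$ with $(A-\rho I)w=x^\ast$, yielding the contradiction $0=u^{T}(A-\rho I)w=u^{T}x^\ast>0$. Hence $\rho$ is simple.

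The step I expect to be the most delicate is the attainment of the supremum: $r$ is only upper semicontinuous on $\Delta$ (it jumps where coordinates vanish), and everything hinges on the observation that replacing $x$ by $(I+A)^{n-1}x$ simultaneously pushes the maximization into the region where $r$ is continuous and strictly improves $r$ unless $x$ is already an eigenvector. That single monotonicity fact is what delivers existence, positivity, and geometric simplicity at once; a secondary subtlety is the passage from geometric to algebraic simplicity, which needs the independent input of the positive left eigenvector.
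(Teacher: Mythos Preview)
Your proof is correct and follows the classical Collatz--Wielandt route: define $r(x)=\min_{i:x_i>0}(Ax)_i/x_i$, take $\rho=\sup r$, use the strictly positive matrix $M=(I+A)^{n-1}$ to push the optimization into the interior where $r$ is continuous, and then read off positivity, the eigenvalue equation, the spectral-radius inequality, and simplicity in turn. The delicate attainment step and the geometric-to-algebraic simplicity passage via a positive left eigenvector are both handled correctly.

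That said, there is nothing to compare against: the paper does not prove this theorem at all. It is quoted in the Preliminaries as a standard background result, with a reference to Horn and Johnson (Theorem~8.4.4). So your proposal is not an alternative to the paper's proof but rather a self-contained proof of a result the paper simply cites.
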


\section{On the eigenvectors of `large' eigenvalues}\label{section:large_eigenvector}
Intuitively, the eigenvectors of a graph corresponding to its `large' eigenvalues should have small entries at vertices with small degrees. In this section, we attempt to describe the structure of eigenvectors of a graph on its subgraphs with bounded maximum degree and large diameter.

Let $G$ be a connected simple graph on $n$ vertices. We partition $V(G)$ as follows. For $U_0 \subset V(G)$ and $i\geq 1,$ define $U_i$ to be the set of vertices at distance $i$ from $U_0$. So $U_0, U_1,\ldots, U_d$ is a partition of $V(G)$ where $d$ is the maximum distance of a vertex in $G$ from $U_0$. We shall also use the symbol $U_i$ to denote the subgraph of $G$ induced by the vertices in $U_i$. Define $\partial U_0=\{u\in U_0: N(u)\cap U_1\neq \phi\}.$

\begin{proposition}\label{prop:eigenvector}
Let $G$ be a connected graph and let $U_0, U_1,\ldots, U_d$ be a partition of $V(G)$ as defined above. Suppose $\deg(v)\leq k$ for all $v\in G-U_0,$ where $k$ is some positive integer. Let  $\lambda$ be an eigenvalue of $G$ such that $|\lambda|>k$ and let $x = (x_v)_{v\in V(G)}$ be a $\lambda$-eigenvector of $G$. Let $M_0=\{|x_v|: v\in \partial U_0\}$ and for $ i\geq 1$, let $M_i=\max\{|x_v|: v\in U_i\}$. Then, for all $1\leq i \leq d,$ we have 
\begin{equation}\label{eq:smallentry1}
   M_i\leq \frac{k}{|\lambda|}M_{i-1}\leq \Bigl( \frac{k}{|\lambda|}\Bigr)^i M_0.
\end{equation}
\end{proposition}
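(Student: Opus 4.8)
The plan is to prove the first inequality in \eqref{eq:smallentry1}, namely $M_i \le \frac{k}{|\lambda|} M_{i-1}$ for each $1 \le i \le d$; the chained bound $M_i \le (k/|\lambda|)^i M_0$ then follows by an immediate induction on $i$. First I would fix an index $i \ge 1$ and pick a vertex $v \in U_i$ achieving $|x_v| = M_i$. The key structural observation is that every neighbour of $v$ lies in $U_{i-1} \cup U_i \cup U_{i+1}$, since adjacent vertices are at distance at most $1$ apart, so their distances to $U_0$ differ by at most $1$. (Here I should note the convention: for $i=1$, the relevant ``$U_0$-side'' neighbours of $v$ lie in $\partial U_0$ by definition of that set, which is exactly why $M_0$ is defined as the max over $\partial U_0$ rather than all of $U_0$.) Since $v \notin U_0$, we have $\deg(v) \le k$ by hypothesis.

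Next I would write down the eigenvalue equation at $v$:
\[
\lambda x_v = \sum_{w \sim v} x_w.
\]
Taking absolute values and applying the triangle inequality,
\[
|\lambda|\, M_i = |\lambda x_v| \le \sum_{w \sim v} |x_w| \le \deg(v) \cdot \max_{w \sim v} |x_w| \le k \cdot \max_{w \sim v} |x_w|.
\]
The remaining task is to bound $\max_{w\sim v}|x_w|$ by $M_{i-1}$. By the observation above, each such $w$ lies in $U_{i-1}$, $U_i$, or $U_{i+1}$, so $|x_w|$ is at most $M_{i-1}$, $M_i$, or $M_{i+1}$ respectively. If the maximizing neighbour lies in $U_{i-1}$ we are immediately done. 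The subtlety is that a neighbour could lie in $U_i$ or $U_{i+1}$, giving only $|x_w| \le M_i$ or $|x_w| \le M_{i+1}$, which is not obviously useful.

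I expect this last point to be the main obstacle, and I would handle it by an appropriate induction or a ``last index'' argument. One clean way: prove the statement for $i = d, d-1, \dots, 1$ in decreasing order, or instead argue globally. Suppose $M_j = \max_{0 \le i \le d} M_i$ is attained at the largest such index $j \ge 1$; then at a vertex $v \in U_j$ with $|x_v| = M_j$, every neighbour $w$ satisfies $|x_w| \le M_j = |x_v|$, and in fact $|x_w| \le M_{j-1}$ would need care — so the cleaner route is: for the chosen $v \in U_i$ with $|x_v| = M_i$, if some neighbour in $U_i \cup U_{i+1}$ had $|x_w| > M_{i-1}$, iterate the argument outward; since $|\lambda| > k$ forces strict contraction, the sequence $M_i, M_{i+1}, \ldots$ cannot increase without eventually exceeding any bound, contradicting finiteness of $d$ (or simply that $M_{d+1}$ is vacuous, $U_{d+1} = \emptyset$). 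Thus starting from $U_d$ where all neighbours of a maximizer lie in $U_{d-1} \cup U_d$, one gets $|\lambda| M_d \le k M_{d-1}$ (any neighbour in $U_d$ contributes at most $M_d$, and $|\lambda| > k$ then forces the bound in terms of $M_{d-1}$), and then induct downward: assuming $M_{i+1} \le \frac{k}{|\lambda|} M_i \le M_i$ already, a neighbour of the $U_i$-maximizer in $U_{i+1}$ contributes at most $M_i$, one in $U_i$ contributes at most $M_i$, so $|\lambda| M_i \le k \max(M_{i-1}, M_i)$; since $|\lambda| > k$ this max must be $M_{i-1}$, yielding $M_i \le \frac{k}{|\lambda|} M_{i-1}$ as desired. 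The only real care needed is bookkeeping the base case at $i = d$ and the $i=1$ case involving $\partial U_0$.
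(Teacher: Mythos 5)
Your proposal is correct and follows essentially the same route as the paper: a downward induction from $i=d$, using the eigenvalue equation at a maximizer $v\in U_i$ together with $|\lambda|>k$ to rule out $M_i>M_{i-1}$ and then conclude $M_i\le \frac{k}{|\lambda|}M_{i-1}$. The only cosmetic difference is your brief detour about ``iterating outward,'' which your final clean argument (base case at $U_d$, then descending induction with $M_{i+1}\le M_i$) already supersedes; just note explicitly, as the paper does, that the case $M_i=0$ is trivial so the division of cases via $\max(M_{i-1},M_i)$ is legitimate.
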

\begin{proof} Suppose $v\in U_i$ is such that $|x_v|=M_i$. We can assume that $M_i>0$, or else we are done. By eigenvalue-eigenvector equation,
\begin{align}\label{eq:smallentry2}
   |\lambda| M_i =|\lambda x_v| & = \bigg|\sum_{\substack{u\sim v\\u\in U_{i-1}}}x_u + \sum_{\substack{u\sim v\\u\in U_{i}}}x_u + \sum_{\substack{u\sim v\\u\in U_{i+1}}}x_u \bigg| \nonumber \\
    & \leq  \sum_{\substack{u\sim v\\u\in U_{i-1}}}|x_u| + \sum_{\substack{u\sim v\\u\in U_{i}}}|x_u| + \sum_{\substack{u\sim v\\u\in U_{i+1}}}|x_u|\nonumber \\ 
   & \leq \deg_{U_{i-1}}(v)M_{i-1}+\deg_{U_i}(v)M_i +\deg_{U_{i+1}}(v)M_{i+1}.
\end{align}

 We prove the claim by induction on $i$. The base case is $i=d$. If $M_d>M_{d-1}$ then \eqref{eq:smallentry2} gives $|\lambda| M_d < k M_d$ which contradicts the assumption that $|\lambda|>k$. (Note that $U_{d+1}$ is an empty set.) Therefore, $M_d\leq M_{d-1}$ and by \eqref{eq:smallentry2} we have 
\[
    |\lambda| M_d  \leq  (\deg_{U_{d-1}}(v)+\deg_{U_d}(v))M_{d-1} \leq  kM_{d-1}.
\]
For the induction step, let the claim be true for $i+1, \ldots, d$. By \eqref{eq:smallentry2} and induction hypothesis, 
\begin{equation}\label{eq:smallentry3}
    |\lambda| M_i \leq \deg_{U_{i-1}}(v)M_{i-1}+(\deg_{U_i}(v) +\deg_{U_{i+1}}(v))M_{i}.
\end{equation}
It is easy to see that if $M_i> M_{i-1}$, we must have $|\lambda|<k,$ a contradiction. So $M_i\leq M_{i-1}$. Then \eqref{eq:smallentry3} implies $|\lambda| M_i \leq (\deg_{U_{i-1}}(v)+\deg_{U_i}(v) +\deg_{U_{i+1}}(v))M_{i-1} \leq k M_{i-1}$. 
\end{proof}

We require the following special case of the above proposition to prove our main results. We say a path $P=v_0v_1\ldots v_s$ in a graph $G$ is an \emph{internal path} if $\deg(v_i)=2$ for $1\leq i \leq s-1.$ 

\begin{lemma}[Exponential Decay Lemma]\label{lemma:decay}
Let $P=v_0v_1\ldots v_s$ be an internal path in a graph $G$. Suppose $\lambda$ is an eigenvalue of $G$ such that $|\lambda|>2$ and let $x = (x_v)_{v\in V(G)}$ be a $\lambda$-eigenvector with $||x||=1$. Let $x_j = x_{v_j}$ for $0\le j\le s$. Define $M_j=\max\{|x_j|, |x_{s-j}|\}$. Then, for $1\leq j \leq \lfloor \frac{s}{2}\rfloor$,
     \begin{equation}\label{eq:smallentry4}
     M_j\leq \bigg(\frac{2}{|\lambda|}\bigg)^j.    
     \end{equation}
Moreover, for $\varepsilon > 0,$     
\begin{enumerate}[$(a)$]
      \item
     if $\lfloor \frac{s}{2}\rfloor \ge p\ge\frac{\log\big(\frac{2|\lambda|}{\varepsilon(|\lambda|-2)}\big)}{\log(|\lambda|/2)}$, then 
     \[\sum_{j=p}^{s-p} |x_j| < \varepsilon.\]
     \item
     if $\lfloor \frac{s}{2}\rfloor \ge p\ge\frac{\log\big(\frac{2|\lambda|^2}{\varepsilon(|\lambda|^2-4)}\big)}{2\log(|\lambda|/2)}$, then 
     \[\sum_{j=p}^{s-p} |x_j|^2 < \varepsilon.\]
\end{enumerate}
\end{lemma}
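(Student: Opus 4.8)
The plan is to deduce \eqref{eq:smallentry4} directly from \autoref{prop:eigenvector}, applied to a carefully chosen partition, and then to obtain $(a)$ and $(b)$ by summing geometric series. First I would reduce to the case that $G$ is connected: otherwise replace $G$ by the component containing $P$, on which $x$ is either identically zero (making every assertion trivial) or a $\lambda$-eigenvector of norm at most $1$. Then I would apply \autoref{prop:eigenvector} with $U_0=V(G)\setminus\{v_1,\dots,v_{s-1}\}$ and $k=2$. Since $P$ is an internal path, every vertex of $G-U_0$ has degree exactly $2$ in $G$, so the degree hypothesis holds, and $|\lambda|>2=k$ is assumed.

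The key point is that this abstract partition specialises to the ``two-ended'' distance partition of $P$. With this $U_0$, a vertex $v_j$ with $1\le j\le s-1$ lies at distance $\min\{j,s-j\}$ from $U_0$ (its only neighbours being $v_{j-1},v_{j+1}$), so $d=\lfloor s/2\rfloor$ and $U_i=\{v_i,v_{s-i}\}$ for $1\le i\le\lfloor s/2\rfloor$ (a single vertex when $s$ is even and $i=s/2$); moreover $v_1$ and $v_{s-1}$ have all their neighbours on $P$, so $\partial U_0=\{v_0,v_s\}$ and no vertex outside $P$ enters any $U_i$ with $i\ge1$. Hence the quantities $M_0,M_1,\dots$ of the lemma coincide with those of \autoref{prop:eigenvector}, and \eqref{eq:smallentry1} gives $M_j\le(2/|\lambda|)^jM_0$ for $1\le j\le\lfloor s/2\rfloor$. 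Since $M_0=\max\{|x_0|,|x_s|\}\le\|x\|=1$, this is exactly \eqref{eq:smallentry4}. (The small cases $s\le2$ are degenerate and the claimed range of $j$ is then empty or a single value, handled directly.)

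For $(a)$, set $q=2/|\lambda|\in(0,1)$. For $p\le j\le s-p$ we have $\min\{j,s-j\}\ge p$, hence $|x_j|\le M_{\min\{j,s-j\}}\le q^{\min\{j,s-j\}}$ by \eqref{eq:smallentry4}. Each integer $m\ge p$ equals $\min\{j,s-j\}$ for at most two indices $j$ in $\{p,\dots,s-p\}$ (namely $m$ and $s-m$), so
\[
\sum_{j=p}^{s-p}|x_j| \;\le\; 2\sum_{m=p}^{\lfloor s/2\rfloor} q^m \;<\; \frac{2q^p}{1-q} \;=\; \frac{2|\lambda|}{|\lambda|-2}\Bigl(\frac{2}{|\lambda|}\Bigr)^p,
\]
and the hypothesis $p\ge\log\!\big(2|\lambda|/(\varepsilon(|\lambda|-2))\big)/\log(|\lambda|/2)$ is precisely what makes the last quantity at most $\varepsilon$. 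Part $(b)$ is the same computation with $q$ replaced by $q^2$: the same counting gives $\sum_{j=p}^{s-p}|x_j|^2\le 2\sum_{m\ge p}q^{2m}<\frac{2q^{2p}}{1-q^2}=\frac{2|\lambda|^2}{|\lambda|^2-4}(2/|\lambda|)^{2p}$, which is $<\varepsilon$ under the stated lower bound on $p$.

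I do not expect a genuine obstacle: the entire content of the estimate lives in \autoref{prop:eigenvector}, and the remainder is bookkeeping plus elementary manipulation of logarithms. The step that most needs care is verifying that the proposition's distance partition $U_0,U_1,\dots$ really does reduce to the two-ended partition of $P$ — in particular that $\partial U_0=\{v_0,v_s\}$ and that the degree-$2$ condition prevents any extraneous vertex of $G$ from landing in a class $U_i$ with $i\ge1$ — together with the routine parity/edge-case check when $s$ is small or even.
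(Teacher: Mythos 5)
Your proposal is correct and follows essentially the same route as the paper: apply \autoref{prop:eigenvector} with $U_0=V(G)\setminus\{v_1,\dots,v_{s-1}\}$ (so that $U_i=\{v_i,v_{s-i}\}$, $\partial U_0=\{v_0,v_s\}$ and $M_0\le\|x\|=1$), then bound the sums by geometric series with ratio $2/|\lambda|$ (resp.\ $4/|\lambda|^2$) and solve for $p$. The only difference is that you explicitly handle the connectivity hypothesis of \autoref{prop:eigenvector} and verify the distance partition, details the paper leaves implicit.
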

\begin{proof} Note that $V(G)$ can be partitioned into sets $U_0,\ldots, U_{\lfloor\frac{s}{2}\rfloor}$ where $U_i=\{v_i, v_{s-i}\}$ and $U_0=V(G)- \cup_i U_i$. Then by \autoref{prop:eigenvector}, inequality \eqref{eq:smallentry4} follows immediately. For part $(a)$ observe that 
\[
\sum_{j=p}^{s-p} |x_j| \leq 2\sum_{j=p}^{\lfloor\frac{s}{2}\rfloor} M_j \leq 2\sum_{j=p}^{\lfloor\frac{s}{2}\rfloor} \big(2/|\lambda|\big)^j < 2\frac{(2/|\lambda|)^p}{1-2/|\lambda|}
\]
which is less than $\varepsilon$ when $p\ge \frac{\log\big(\varepsilon(1/2-1/|\lambda|)\big)}{\log(2/|\lambda|)}=\frac{\log\big(\frac{2|\lambda|}{\varepsilon(|\lambda|-2)}\big)}{\log(|\lambda|/2)}$.

Similarly, for part $(b),$
\[\sum_{j=p}^{s-p} |x_j|^2  <  2\frac{(2/|\lambda|)^{2p}}{1-(2/|\lambda|)^2} \]
which is less than $\varepsilon$ when $p\ge \frac{\log\big(\varepsilon(1/2-2/|\lambda|^2)\big)}{2\log(2/|\lambda|)}=\frac{\log\big(\frac{2|\lambda|^2}{\varepsilon(|\lambda|^2-4)}\big)}{2\log(|\lambda|/2)}$.
\end{proof}

In fact, for internal paths, we can prove the following unimodality result about eigenvectors corresponding to eigenvalues with absolute value bigger than 2.

\begin{lemma}[Unimodality Lemma]\label{lemma:unimodal}
 Let $P=v_0v_1\ldots v_s$ be an internal path in a graph $G$. Suppose $\lambda$ is an eigenvalue of $G$ such that $|\lambda|\ge 2$ and let $\mu=|\lambda|-1.$ Let $x$ be a $\lambda$-eigenvector of $G$ and let $x_j$ be the entry of $x$ corresponding to $v_j$ for $0\le j\le s$. Suppose $k$ is such that 
 \[|x_k|=\min\{|x_j|: 0\le j\le s\}.\] 
Then for $0\le i<j\le k-1$ we have
     \[|x_i| \ge \mu^{j-i} |x_j|,\]
and for $k+1\le i< j\le s$ we have
     \[|x_j| \ge \mu^{j-i} |x_i|.\]
In other words, the vector $|x|$ is unimodal over the internal path. 
\end{lemma}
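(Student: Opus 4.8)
The plan is to work directly with the eigenvalue–eigenvector equation on the degree-two vertices of the internal path. For each interior index $j$ with $1 \le j \le s-1$, the vertex $v_j$ has exactly two neighbours, $v_{j-1}$ and $v_{j+1}$, so $\lambda x_j = x_{j-1} + x_{j+1}$, hence $x_{j+1} = \lambda x_j - x_{j-1}$ (and symmetrically $x_{j-1} = \lambda x_j - x_{j+1}$). Taking absolute values gives the two-sided bound $|x_{j+1}| \ge |\lambda|\,|x_j| - |x_{j-1}|$ and $|x_{j-1}| \ge |\lambda|\,|x_j| - |x_{j+1}|$. The strategy is to show that on the ``left'' portion $0 \le j \le k-1$ the sequence $|x_0|, |x_1|, \dots, |x_k|$ is non-increasing and in fact contracts by a factor $\mu = |\lambda|-1$ at each step, and symmetrically on the ``right'' portion $k+1 \le j \le s$ the sequence $|x_k|, \dots, |x_s|$ grows by a factor $\mu$ at each step.

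First I would establish a local monotonicity claim: if $|x_{j-1}| \le |x_j|$ for some interior $j$, then $|x_{j+1}| \ge \mu |x_j| \ge |x_j|$ (using $|\lambda| \ge 2$ so $\mu \ge 1$), because $|x_{j+1}| \ge |\lambda||x_j| - |x_{j-1}| \ge |\lambda||x_j| - |x_j| = \mu |x_j|$. This says: once the sequence of absolute values stops decreasing, it strictly increases from then on and never comes back down. Consequently the sequence $(|x_j|)_{j=0}^s$ first (weakly) decreases and then (weakly) increases — it is unimodal — and its unique ``valley'' region contains the global minimizer $v_k$. I would then argue that to the right of the valley, for $k+1 \le i$, we have $|x_i| \le |x_{i+1}|$, so the reversed inequality $|x_{i+1}| \ge \mu|x_i|$ applies at every such step; chaining these gives $|x_j| \ge \mu^{j-i}|x_i|$ for $k+1 \le i < j \le s$. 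The left side is handled symmetrically by reading the path from $v_s$ to $v_0$: for $i \le k-1$ we get $|x_i| \ge \mu |x_{i+1}|$, and chaining yields $|x_i| \ge \mu^{j-i}|x_j|$ for $0 \le i < j \le k-1$.

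A point needing a little care is the boundary/degenerate behaviour: if the minimum $|x_k|$ is attained at $k=0$ or $k=s$ (an endpoint of the path), or attained at several consecutive indices, one of the two asserted chains is vacuous and the argument for the other side still goes through verbatim; I would remark on this rather than belabour it. Another subtlety is that the recurrence $\lambda x_j = x_{j-1}+x_{j+1}$ is only valid for $1 \le j \le s-1$, so the contraction/expansion steps are only asserted at interior transitions — but that is exactly the range of indices in the statement ($j-1 \le k-1 \le s-1$ on the left, $i \ge k+1 \ge 1$ on the right), so no extra endpoint case is needed. The main obstacle, such as it is, will be proving the local monotonicity claim cleanly and then using it to pin down that the global minimizer sits at the unique turning point of the unimodal profile, so that ``weakly increasing'' holds at every step strictly to the right of $k$ (and weakly decreasing strictly to the left); once that structural fact is in place, the multiplicative bounds follow by a one-line induction.
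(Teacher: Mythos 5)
Your proposal is correct and takes essentially the same route as the paper: both start from the eigenvalue equation $\lambda x_j = x_{j-1}+x_{j+1}$ on interior vertices, use the triangle inequality to get $|\lambda|\,|x_j| \le |x_{j-1}|+|x_{j+1}|$, deduce that $|x|$ is unimodal along the path, and then chain the resulting per-step factor $\mu=|\lambda|-1$ on each side of the minimizer. Your explicit ``once non-decreasing, always non-decreasing by a factor $\mu$'' propagation simply spells out the monotonicity that the paper attributes to discrete convexity before performing the same chaining step.
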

\begin{proof} Recall the eigenvalue equation $\lambda x_j = x_{j-1} + x_{j+1}$ which holds for each $1\leq j\leq s-1$. By the triangle inequality,
\begin{equation}\label{eq:convex}
2|x_j| \le |\lambda| |x_j| \le |x_{j-1}| + |x_{j+1}|,    
\end{equation}
so $|x|$ is a `convex function' on the internal path, and it has a minimum, say $|x_k|$. For $0\le i< j\le k-1$, inequality \eqref{eq:convex} gives the following:
\[
|x_i| \ge |\lambda| |x_{i+1}| - |x_{i+2}| = \mu |x_{i+1}|+(|x_{i+1}|-|x_{i+2}|)\ge \mu |x_{i+1}|.\]
By repeating the same for $i+1, \ldots, j-1,$ we conclude that $|x_i|\ge \mu^{j-i}|x_j|$.
The proof for $k+1 \le i< j \le s$ is the same.
\end{proof}

It is interesting to compare the above lemma with the following known result about the unimodality of the principal eigenvector on an internal path of a graph. The authors in \cite{Xue_2020} used it to minimize spectral radius over unicyclic graphs with given girth.

\begin{lemma}\cite[Lemma 1]{Xue_2020} \label{lemma:principalunimodal}
Let $P = v_0v_1\dots v_s$ be an internal path in $G$. Let $x$ be the principal eigenvector of $G$, and suppose $x_i$ is the entry of $x$ corresponding to $v_i$. If $\lambda_1(G)>2$, then the following statements hold. 
\begin{enumerate}[$(a)$]
    \item If $x_0 = x_s$, then $x_0 > x_1 > \dots > x_{\lfloor{s/2}\rfloor} = x_{\lceil{s/2}\rceil} <\dots < x_{s-1} < x_{s}$ and $x_i=x_{s-i}$ for $0 \leq i \leq s$.
    \item Suppose $x_0 < x_s$. If $x_0 \leq x_1$, then $x_0 \leq x_1 < x_2< \dots < x_{s-1} < x_s$. If $x_0>x_1$, then $x_{s-i}>x_i$ for $0\leq i \leq \lceil{s/2}\rceil -1$, and there exists an integer $1\leq k \leq \lfloor{s/2}\rfloor$ such that  $x_0 > x_1 > \dots > x_{k-1} \geq x_{k} <\dots < x_{s-1} < x_{s}$.
\end{enumerate}
\end{lemma}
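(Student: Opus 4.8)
The plan is to exploit the fact that along the internal path the eigenvector obeys a second‑order linear recurrence with constant coefficients whose characteristic roots are real and positive because $\lambda_1>2$. Write $\lambda:=\lambda_1(G)$ and $d_j:=x_j-x_{j-1}$ for $1\le j\le s$. By Perron–Frobenius $x>0$, and since $\deg(v_j)=2$ for $1\le j\le s-1$ the eigenvalue equation reads $\lambda x_j=x_{j-1}+x_{j+1}$; because $\lambda>2$ and $x_j>0$ this yields $x_{j+1}-x_j>x_j-x_{j-1}$, i.e. $d_1<d_2<\cdots<d_s$. A strictly increasing sequence changes sign at most once, so the set $\{j:d_j\le 0\}$ is an initial segment; hence $x$ restricted to the path is unimodal, strictly decreasing up to some index and strictly increasing afterwards, with at most one flat step at the bottom. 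This already gives the qualitative shape needed in both $(a)$ and $(b)$; what remains is to locate the minimum and to compare $x_i$ with $x_{s-i}$.

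For the comparison I would introduce the auxiliary sequence $z_j:=x_j-x_{s-j}$, $0\le j\le s$. Since the path is symmetric, $y_j:=x_{s-j}$ satisfies the same recurrence for $1\le j\le s-1$, hence so does $z_j$, and moreover $z_{s-j}=-z_j$. The characteristic equation $t^2-\lambda t+1=0$ has roots $r>1>r^{-1}>0$ (distinct, since $\lambda\neq 2$), so the general solution is $a r^j+b r^{-j}$; imposing $z_{s-j}=-z_j$ forces $b=-a r^s$, that is, $z_j=a\bigl(r^j-r^{s-j}\bigr)$ for some constant $a$.

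For part $(a)$: if $x_0=x_s$ then $z_0=a(1-r^s)=0$, and since $r^s\neq 1$ we get $a=0$, so $z\equiv 0$, i.e. $x_j=x_{s-j}$ for all $j$. Feeding this symmetry into the unimodality from the first paragraph pins the minimum at the centre (with $x_{\lfloor s/2\rfloor}=x_{\lceil s/2\rceil}$ immediate from $x_j=x_{s-j}$) and forces strict monotonicity on each half, which is exactly the stated chain (one checks the cases $s$ even and $s$ odd via $d_j=-d_{s+1-j}$). For part $(b)$: if $x_0<x_s$ then $z_0<0$, so $a(1-r^s)<0$ and, since $r^s>1$, $a>0$; thus $z_j=a(r^j-r^{s-j})<0$ whenever $j<s/2$, giving $x_{s-i}>x_i$ for $0\le i\le\lceil s/2\rceil-1$. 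If $x_0\le x_1$ then $d_1\ge 0$, hence $d_j>0$ for all $j\ge 2$ and $x_0\le x_1<x_2<\cdots<x_s$. If $x_0>x_1$ then $d_1<0$; since $\sum_{j=1}^s d_j=x_s-x_0>0$ we cannot have $d_s\le 0$, so there is a unique $k$ with $d_k\le 0<d_{k+1}$, which gives $x_0>\cdots>x_{k-1}\ge x_k<\cdots<x_s$ and $x_k=\min_j x_j$. Finally, $k\le\lfloor s/2\rfloor$: if $k>\lfloor s/2\rfloor$ then $s-k<k$, so $z_{s-k}=a(r^{s-k}-r^{k})<0$, i.e. $x_{s-k}<x_k$, contradicting the minimality of $x_k$.

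I expect the only genuinely delicate point to be this last one — extracting the \emph{location} of the valley (the bound $k\le\lfloor s/2\rfloor$) and the entrywise sign pattern $x_i$ versus $x_{s-i}$ from the merely global data $x>0$ and $x_0<x_s$. The device that makes this routine is the antisymmetric auxiliary solution $z_j=a(r^j-r^{s-j})$: once its single closed‑form constant $a$ is sign‑determined by one boundary comparison, every comparison along the path follows, and combining this with the elementary convexity of $x$ closes both parts. Everything else is bookkeeping over the parities of $s$ and the boundary indices.
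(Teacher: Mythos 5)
Your argument is correct, but it takes a genuinely different route from the one in this paper --- in fact it is essentially the route the paper deliberately sidesteps. You work from the path recurrence: positivity of the Perron vector plus $\lambda_1>2$ makes the differences $d_j=x_j-x_{j-1}$ strictly increasing (hence unimodality with at most one flat step), and then the antisymmetric sequence $z_j=x_j-x_{s-j}$, which satisfies the same recurrence and must have the closed form $a(r^j-r^{s-j})$ with $r>1$, settles all the symmetric comparisons: $a=0$ gives the mirror symmetry in part $(a)$, while $a>0$ gives $x_i<x_{s-i}$ for $i<s/2$ and forces the valley index $k$ into $\{1,\dots,\lfloor s/2\rfloor\}$ in part $(b)$; the remaining chains follow from the sign pattern of the $d_j$. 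I checked the details (the representation of $z$ on $0\le j\le s$, the sign determination of $a$, the parity bookkeeping via $d_j=-d_{s+1-j}$, and the contradiction $x_{s-k}<x_k$ when $k>\lfloor s/2\rfloor$), and they hold; the only degenerate case, $s=1$, is trivially true. The paper, by contrast, explicitly notes that the original proof in the cited source solves the path recurrence, and offers a different proof: it combines \autoref{lemma:unimodal} (convexity/unimodality of $|x|$, valid for every eigenvalue with $|\lambda|\ge 2$) with a symmetrization argument --- if $x_j\neq x_{s-j}$ at a first index $j$ (or $x_j\ge x_{s-j}$ in case $(b)$), exchange the edges $v_{j-1}v_j,\,v_{s-j+1}v_{s-j}$ for $v_{j-1}v_{s-j},\,v_{s-j+1}v_j$ to get $G'\cong G$, and Courant--Fischer shows $x$ is still a $\lambda_1$-eigenvector of $G'$, contradicting uniqueness of the Perron vector. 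Your approach buys an explicit quantitative description (the ratio $r$ and the exact form of $z$) and is self-contained, close in spirit to the original reference; the paper's approach buys independence from solving the recurrence and showcases how its new Unimodality Lemma, together with a 2-switch and Perron uniqueness, yields the principal-eigenvector refinement, which is precisely the point the authors wanted to illustrate.
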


In a sense, \autoref{lemma:unimodal} improves upon the above \autoref{lemma:principalunimodal} in two ways. Firstly, it extends the unimodality result to all eigenvalues of the graph outside $(-2,2)$. Secondly, it estimates how consecutive entries compare to each other in terms of the factor $\mu=|\lambda|-1$. The proof of \autoref{lemma:principalunimodal} given in \cite{Xue_2020} uses the solution of the recurrence relation of the path. We offer another proof which may be of independent interest. It is based on \autoref{lemma:unimodal} and Courant-Fischer Theorem. 

\begin{proof}[Proof of \autoref{lemma:principalunimodal}]
Let us assume that $||x||=1$. First, consider the case $x_0=x_s$. Suppose to the contrary, there exists $j$ such that
$j=\min\{i : x_i\neq x_{s-i}, 0\le i \le \lceil \frac{s}{2}\rceil -1 \}$. Clearly, $j>0$. Define a new graph $G'=G-v_{j-1}v_j-v_{s-j+1}v_{s-j}+v_{j-1}v_{s-j}+v_{s-j+1}v_j$. Clearly $G'\cong G$. By Courant-Fischer Theorem,
\begin{align*}
  \lambda_1(G') &\geq x^TA(G')x\\
  &=x^TA(G)x-x_{j-1}x_j-x_{s-j+1}x_{s-j}+x_{j-1}x_{s-j}+x_{s-j+1}x_j\\ 
  &=\lambda_1(G)=\lambda_1(G')
\end{align*}
since $x_{j-1}=x_{s-j+1}$. It follows that $x$ is an eigenvector of $G'$. This contradicts the fact that the principal eigenvector of $G$ is unique. Hence, no such $j$ exists. Then part $(a)$ follows by \autoref{lemma:unimodal}.

To prove $(b)$, suppose that $x_0<x_s$. If $x_0\leq x_1$ then by \autoref{lemma:unimodal}, $x_0=\min\{x_i: 0\le i\le s\}$ and $x_0 \leq x_1 < x_2< \dots < x_{s-1} < x_s$. On the other hand, if $x_0>x_1$ then let $j=\min\{i : x_i\geq  x_{s-i}, 0\le i \le \lceil \frac{s}{2}\rceil-1\}$. Clearly, $j>0$. As before, consider the graph $G'=G-v_{j-1}v_j-v_{s-j+1}v_{s-j}+v_{j-1}v_{s-j}+v_{s-j+1}v_j$. Then the same proof as above shows that $x$ is an eigenvector of $G'$ which contradicts the uniqueness of the principal eigenvector of $G$. Therefore, no such $j$ exists. Now if $k$ is such that $x_k=\min\{x_i: 0\le i\le s\}$, then $1\le k \le \lfloor \frac{s}{2}\rfloor$. Part $(b)$ is then immediate from \autoref{lemma:unimodal}.
\end{proof}

\section{Eigenvalues of $G_t$ and $H_t$} \label{section:GtHt}
This section is devoted to the proof of Theorems \ref{thm:main1} and \ref{thm:main2}. Let us recall that the graphs $G_t$ and $H_t$ are defined for every $t\ge 1$ by subdividing a prefixed set $S\subset E(G)$.

Observe that $H_t$ is an induced subgraph of $G_{2t+2}$ for all $t$. Hence, $\lambda_k(G_{2t+2})\ge \lambda_k(H_t)$ by Interlacing Theorem. In the following lemmas we show that $\lambda_k(H_t)$ cannot be much smaller than $\lambda_k(G_{2t+2})$ for large $t$.

\begin{lemma}\label{lem:c1} Let $k\in \mathbb{N}$ and $\eta \in \mathbb{R}^+$. Suppose $\{t_\ell\}_{\ell\in \mathbb{N}}$ is a sequence of natural numbers such that 
\[\lambda_k(G_{2t_\ell+2})\ge 2+\eta\] 
for all $\ell$. Then for every $\varepsilon>0$ there exists $\ell_0$ such that for all $\ell\geq \ell_0$ we have
\[ \lambda_k(H_{t_\ell})\ge 2+\eta -\varepsilon.\]
\end{lemma}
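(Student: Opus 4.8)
The plan is to take an eigenvector $x$ of $G_{2t_\ell+2}$ for the eigenvalue $\lambda := \lambda_k(G_{2t_\ell+2})$, exploit the exponential decay guaranteed by the Exponential Decay Lemma on the long internal paths, and turn $x$ into a test vector for $H_{t_\ell}$. Recall that $H_{t_\ell}$ is obtained from $G_{2t_\ell+1}$ by deleting the middle edge of each stretched path, so on each path $P_{uv}$ of $G_{2t_\ell+2}$ the graph $H_{t_\ell}$ sees two pendant sub-paths joined to the endpoints $u,v$ — essentially the two halves of the stretched path with the very center cut out. Each such $P_{uv}$ is an internal path (all interior vertices have degree $2$), so Lemma \ref{lemma:decay} applies: since $|\lambda| \ge 2+\eta > 2$, the eigenvector entries $x_j$ along the path satisfy $M_j \le (2/|\lambda|)^j$, and in particular for any prescribed tolerance the total $\ell^1$-mass (part $(a)$) and $\ell^2$-mass (part $(b)$) of the middle segment of the path can be made smaller than any $\varepsilon' > 0$ once $t_\ell$ is large enough.

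First I would fix a large threshold $p = p(\eta, \varepsilon')$ as in Lemma \ref{lemma:decay}, valid uniformly because $|\lambda| \ge 2+\eta$ gives a uniform lower bound on the decay rate $|\lambda|/2 \ge 1 + \eta/2$. For $\ell$ large enough that $t_\ell > p$, I would define the test vector $y$ on $V(H_{t_\ell})$ by restricting $x$ to the vertices that survive (all of $V(G)$, plus the $p$ vertices nearest each endpoint on each half of each path) and setting $y$ to zero on the vertices of the deleted middle segments. Since only $|S|$ paths are affected and each contributes at most $O((2/|\lambda|)^{2p})$ to the discarded $\ell^2$-mass, we get $\|y\|^2 \ge 1 - |S|\varepsilon'$, so $y \ne 0$ for $\ell$ large. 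Next I would estimate the Rayleigh quotient $y^{T} A(H_{t_\ell}) y / y^{T} y$. The quadratic form $y^{T} A(H_{t_\ell}) y$ equals $x^{T} A(G_{2t_\ell+2}) x$ restricted to the surviving edges, minus the contributions of edges incident to discarded vertices; each such lost term is a product $x_i x_{i+1}$ with both indices in the middle segment, hence bounded by $\frac12(x_i^2 + x_{i+1}^2)$, and again summable to something $\le C|S|\varepsilon'$. Therefore $y^{T}A(H_{t_\ell})y \ge \lambda - C|S|\varepsilon'$ and $y^{T}y \le 1$, giving a Rayleigh quotient at least $\lambda - C'|S|\varepsilon'$.

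The remaining step is to upgrade this single test vector to a $k$-dimensional certificate for $\lambda_k(H_{t_\ell})$ via Courant-Fischer. Here I would take $k$ orthonormal eigenvectors $x^{(1)}, \dots, x^{(k)}$ of $G_{2t_\ell+2}$ associated with $\lambda_1(G_{2t_\ell+2}) \ge \cdots \ge \lambda_k(G_{2t_\ell+2}) \ge 2+\eta$, apply the truncation above to each to obtain $y^{(1)}, \dots, y^{(k)}$, and argue that these remain linearly independent (in fact nearly orthonormal) for large $\ell$: the Gram matrix of the $y^{(i)}$ differs from the identity by a matrix of operator norm $O(|S|\varepsilon')$, so it stays nonsingular. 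On the span $U = \mathrm{span}\{y^{(1)}, \dots, y^{(k)}\}$, which has dimension $k$, the minimum Rayleigh quotient is at least $2+\eta - O(|S|\varepsilon')$ — this needs a short argument combining the near-orthonormality with the fact that on the subspace spanned by the $x^{(i)}$ the quadratic form of $A(G_{2t_\ell+2})$ is at least $(2+\eta)\|\cdot\|^2$, plus the error bounds from the truncation. Choosing $\varepsilon'$ so that $O(|S|\varepsilon') < \varepsilon$ and then $\ell_0$ so that $t_{\ell_0} > p$ and all the mass estimates kick in, Courant-Fischer yields $\lambda_k(H_{t_\ell}) \ge 2+\eta-\varepsilon$ for $\ell \ge \ell_0$.

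The main obstacle I anticipate is the bookkeeping in the last paragraph: transferring a $k$-dimensional lower bound (rather than just a bound for $\lambda_1$) requires controlling how the truncation perturbs an entire $k$-dimensional subspace simultaneously — both that it stays $k$-dimensional and that the quadratic form does not drop by more than $O(\varepsilon)$ on \emph{all} of it, not just on the images of the chosen basis vectors. This is where the uniformity of the decay rate (coming from $|\lambda_k| \ge |\lambda_j| \ge 2+\eta$ for all $j \le k$) is essential, and where one must be careful that the error terms, though involving the fixed quantities $|S|$ and $k$, genuinely go to $0$ as $\ell \to \infty$ for each fixed $k$.
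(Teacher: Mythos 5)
Your plan is correct in substance and shares the paper's core ingredients -- the top-$k$ orthonormal eigenvectors of $G_{2t_\ell+2}$, the uniform exponential decay from \autoref{lemma:decay} (valid for all $i\le k$ since $\lambda_i\ge\lambda_k\ge 2+\eta$), and a $k$-dimensional test subspace fed into Courant--Fischer -- but it differs in how the passage from $G_{2t_\ell+2}$ to $H_{t_\ell}$ is made. You truncate the eigenvectors (zeroing the middle segments) to obtain vectors living on $V(H_{t_\ell})$, which forces you to verify that the truncated family is still nearly orthonormal (Gram matrix $I-E$ with $\|E\|$ small, via the $\ell^2$ bound of \autoref{lemma:decay}$(b)$) and that the quadratic form does not drop by more than $O(\varepsilon)$ on the \emph{whole} span, not just on the basis images; this is routine perturbation bookkeeping, but it is exactly the step you leave as a sketch. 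The paper avoids it entirely with a small device: instead of modifying the vectors, it modifies the graph, comparing $A(G_{2t+2})$ with $A(H_t')$ where $H_t'$ is $H_t$ together with $|S|$ isolated vertices, so that both matrices act on the same vertex set, the untruncated eigenvectors $x^{(1)},\dots,x^{(k)}$ can be used verbatim, the test subspace is exactly $k$-dimensional with no Gram-matrix argument, and the only error terms are the four products coming from the two deleted middle edges per path, controlled by \autoref{lemma:decay}$(a)$ alone; one then uses $\lambda_k(H_t')=\lambda_k(H_t)$ (adding isolated vertices only contributes zero eigenvalues, and $\lambda_k(H_t)>0$). Two minor points to tighten if you execute your version: the ``lost'' edge terms at the truncation boundary have one endpoint in the kept region, not both in the discarded segment (the decay bound still covers them since that endpoint is at distance $\ge p$ from the path ends); and after normalizing $z$ in your span you must convert the near-orthonormality into a bound $\sum_i c_i^2\le 1+O(\varepsilon')$ on the coefficients before concluding the Rayleigh-quotient estimate.
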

\begin{proof} Let $x^{(i)}=(x^{(i)}_v)_{v\in V(G_{2t+2})}$ be a normalized eigenvector for $\lambda_i(G_{2t+2})$. For an edge $uv$ in $S$, let $a,b,c$ be the middle vertices in the $t$-stretch of $uv$. Define $H_t'$ as the disjoint union of $H_t$ and $|S|$ many copies of $K_1$. Then for $1\leq i\leq j\leq k$ we have
\begin{align*}
     \langle x^{(i)}, A(H_t') x^{(j)} \rangle &= \langle x^{(i)}, A(G_{2t+2}) x^{(j)} \rangle - \sum_{uv\in S} \Bigl( x^{(i)}_ax^{(j)}_b + x^{(i)}_bx^{(j)}_a +  x^{(i)}_cx^{(j)}_b + x^{(i)}_bx^{(j)}_c \Bigr)\\
     &\ge \delta_{ij} \lambda_i(G_{2t+2}) - \sum_{uv\in S} \Bigl( |x^{(i)}_a||x^{(j)}_b| + |x^{(i)}_b||x^{(j)}_a|+|x^{(i)}_c||x^{(j)}_b| + |x^{(i)}_b||x^{(j)}_c|\Bigr)\\
     &\ge \delta_{ij} \lambda_i(G_{2t+2}) - \sum_{uv\in S}\Bigl(|x^{(i)}_a|+|x^{(i)}_b|+|x^{(i)}_c|\Bigr) \Bigl(|x^{(j)}_a| + |x^{(j)}_b|+|x^{(j)}_c|\Bigr).
\end{align*}

We may assume that $\varepsilon\le (k^2|S|)^{-1}$. By \autoref{lemma:decay}$(a)$, $|x^{(i)}_a|+|x^{(i)}_b|+|x^{(i)}_c|<\varepsilon$ for all large $t$ in our sequence, since $\lambda_i(G_{2t_\ell+2})\ge 2+\eta$, for all $i\leq k$. Thus we have
\[ \langle x^{(i)}, A(H_t') x^{(j)} \rangle    \ge \delta_{ij} \lambda_i(G_{2t+2}) - |S| \varepsilon^2.\]

Now let $W$ be the subspace spanned by $x^{(1)},\ldots,x^{(k)}$. Let 
\[z=\sum_{i=1}^k c_i x^{(i)},\quad \sum_{i=1}^k c_i^2=1.\]
Then,
\begin{align*}
    \langle z, A(H_t')z \rangle &= \sum_{i=1}^k \sum_{j=1}^k c_i c_j \langle x^{(i)}, A(H_t') x^{(j)} \rangle \\
    &\ge \sum_{i=1}^k c_i^2 \lambda_i(G_{2t+2}) - \sum_{i=1}^k \sum_{j=1}^k |c_i||c_j ||S| \varepsilon^2\\
    &\ge \lambda_k(G_{2t+2}) - \left(\sum_{i=1}^k |c_i|\right)^2 |S| \varepsilon^2\\
    &\ge \lambda_k(G_{2t+2}) - k^2 |S| \varepsilon^2.
\end{align*}
Thus by Courant-Fischer Theorem,
\begin{align*}
    \lambda_k(H_t) &= \lambda_k(H_t') \\
    &= \max_{\substack{U\\ \dim(U)=k}} \min_{\substack{z\in U\\||z||=1}} \langle z, A(H_t') z\rangle\\
    &\ge \min_{\substack{z\in W\\ ||z||=1}}\langle z, A(H_t') z\rangle\\
    &\ge \lambda_k(G_{2t+2}) - k^2|S| \varepsilon^2 \ge 2+\eta-\varepsilon.
\end{align*}
This completes the proof.
\end{proof}

The following is also immediate from the proof of \autoref{lem:c1}.

\begin{lemma}\label{lem:c2} Let $k\in \mathbb{N}$. Suppose there exist positive constants $t_0$ and $\eta$ such that 
\[ \lambda_k(G_{2t+2})\geq 2+\eta\]
for all $t\geq t_0$. Then for every $\varepsilon > 0$ there exists $t_1$ such that whenever $t\geq t_1,$ 
\[ \lambda_k(H_t)\ge \lambda_k(G_{2t+2})-\varepsilon. \]
\end{lemma}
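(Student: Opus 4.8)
The plan is to obtain Lemma~\ref{lem:c2} almost for free by re-reading the argument for Lemma~\ref{lem:c1} and noticing that the only place where the hypothesis $\lambda_k(G_{2t_\ell+2})\ge 2+\eta$ was used was to feed Lemma~\ref{lemma:decay}$(a)$ with a fixed gap from $2$, and that the conclusion ``$\lambda_k(H_t)\ge \lambda_k(G_{2t+2})-k^2|S|\varepsilon^2$'' was already established \emph{before} the final inequality ``$\ge 2+\eta-\varepsilon$'' was invoked. So the first step is to state that for every $t\ge t_0$ and every $\varepsilon>0$ with $\varepsilon\le(k^2|S|)^{-1}$, the chain of inequalities in the proof of Lemma~\ref{lem:c1} yields
\[
  \lambda_k(H_t)\ \ge\ \lambda_k(G_{2t+2})-k^2|S|\,\varepsilon^2,
\]
provided $t$ is large enough that Lemma~\ref{lemma:decay}$(a)$ applies to each of $x^{(1)},\dots,x^{(k)}$ with threshold parameter $\varepsilon$.

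Second, I need to check that Lemma~\ref{lemma:decay}$(a)$ really does apply uniformly over $i\le k$ once $t$ is large, using only the weaker hypothesis here. The point is that $\lambda_i(G_{2t+2})\ge\lambda_k(G_{2t+2})\ge 2+\eta$ for all $i\le k$ and all $t\ge t_0$, so $|\lambda_i|\ge 2+\eta>2$, and the required lower bound on $p$ in Lemma~\ref{lemma:decay}$(a)$, namely $p\ge \log\!\bigl(\tfrac{2|\lambda_i|}{\varepsilon(|\lambda_i|-2)}\bigr)/\log(|\lambda_i|/2)$, can be bounded by a quantity depending only on $\eta,\varepsilon$ (and not on $t$ or $i$): indeed for $|\lambda_i|$ ranging over $[2+\eta,\,\Delta+\text{const}]$ the function $x\mapsto \log\!\bigl(\tfrac{2x}{\varepsilon(x-2)}\bigr)/\log(x/2)$ is bounded above, say by $P(\eta,\varepsilon)$. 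Hence as soon as $\lfloor (2t+1)/2\rfloor = t \ge P(\eta,\varepsilon)$ — equivalently $t\ge \max\{t_0,\lceil P(\eta,\varepsilon)\rceil\}$ — every internal path of length $2t+1$ created by subdividing an edge of $S$ satisfies the hypotheses of Lemma~\ref{lemma:decay}$(a)$, so $|x^{(i)}_a|+|x^{(i)}_b|+|x^{(i)}_c|<\varepsilon$ for all $i\le k$, exactly as in the proof of Lemma~\ref{lem:c1}.

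Third, I combine the two observations: given $\varepsilon>0$, replace $\varepsilon$ by $\varepsilon'=\min\{\varepsilon/(k^2|S|),\,(k^2|S|)^{-1}\}$ in the above, set $t_1=\max\{t_0,\lceil P(\eta,\varepsilon')\rceil\}$, and conclude that for all $t\ge t_1$,
\[
  \lambda_k(H_t)\ \ge\ \lambda_k(G_{2t+2})-k^2|S|\,(\varepsilon')^2\ \ge\ \lambda_k(G_{2t+2})-\varepsilon,
\]
which is the assertion. I should also note the trivial inequality $\lambda_k(H_t)\le\lambda_k(G_{2t+2})$ coming from the Interlacing Theorem (since $H_t$ is an induced subgraph of $G_{2t+2}$), so that in fact $|\lambda_k(H_t)-\lambda_k(G_{2t+2})|<\varepsilon$, though the stated lemma only needs the one-sided bound.

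The only mild obstacle is the uniformity in step two: one must be slightly careful that the threshold in Lemma~\ref{lemma:decay}$(a)$ does not blow up as $t\to\infty$. But since the relevant eigenvalues are trapped in a fixed compact interval $[2+\eta,\lambda_1(G_{2t+2})]$ and $\lambda_1(G_{2t+2})$ is itself bounded (it never exceeds $\lambda_1(G)$ after subdivisions of internal paths, or at worst is bounded by the maximum degree plus a constant), the supremum $P(\eta,\varepsilon)$ is finite, and the argument goes through verbatim. Everything else is a direct transcription of the computation already carried out in the proof of Lemma~\ref{lem:c1}, which is why the lemma was stated as ``immediate'' from that proof.
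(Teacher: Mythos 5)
Your proposal is correct and is exactly how the paper handles this lemma: the paper simply declares Lemma~\ref{lem:c2} immediate from the proof of Lemma~\ref{lem:c1}, relying on the same intermediate bound $\lambda_k(H_t)\ge\lambda_k(G_{2t+2})-k^2|S|\varepsilon^2$ and the same uniform application of Lemma~\ref{lemma:decay}$(a)$ that you spell out. Your extra care about the uniformity of the decay threshold over $i\le k$ and over $t$ (and the rescaling of $\varepsilon$) just makes explicit what the paper leaves implicit; the minor off-by-one in the stretch length is immaterial.
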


We now prove our first main result.

\begin{proof}[Proof of \autoref{thm:main1}]
We first show that 
\begin{equation}\label{eq:1evensequence}
  \lim_{t\to \infty}\lambda_k(G_{2t+2})=\lim_{t\to \infty}\lambda_k(H_t).  
\end{equation}
Note that $\{H_t\}_{t\in \mathbb{N}}$ is a sequence of graphs with bounded maximum degree and that $H_t$ is an induced subgraph of $H_{t+1}$. Therefore, $\lambda_k(H_t)$ is an increasing bounded sequence, hence Cauchy. Since the path $P_t$ is an induced subgraph of $H_t$, we have that $\lambda_k(H_t)\ge \lambda_k(P_t)$ for all $t$ by Interlacing Theorem. Thus, 
\[\lim_{t\to \infty}\lambda_k(H_t)\geq \lim_{t\to \infty}\lambda_k(P_t)=2.\]

First suppose that $\lim_{t\to \infty}\lambda_k(H_{t})=2$. By \autoref{lem:c1} there exists a non-negative decreasing function $f(t)$ such that for all large $t$ we have $\lambda_k(G_{2t+2})\leq 2+f(t)$ and $f(t)\to 0$ as $t\to \infty$.  But $\lambda_k(G_{2t+2})\geq \lambda_k(H_t)$ and thus by Sandwich Theorem for limits we get $\lim_{t\to \infty}\lambda_k(G_{2t+2})=2$.

Now suppose that $\lim_{t\to \infty}\lambda_k(H_{t})=2+2\eta$ where $\eta$ is some positive constant. Then there exists $t_0$ such that for all $t\geq t_0$ we have $\lambda_k(G_{2t+2})\geq \lambda_k(H_{t})> 2+\eta$. Now for any $\varepsilon>0$, by \autoref{lem:c2}, we can find $t_1$ such that for all $t\geq t_1,$
\[ |\lambda_k(G_{2t+2})-\lambda_k(H_{t})|<\varepsilon.\]
Thus $ \lim_{t\to \infty}\lambda_k(G_{2t+2})=2+2\eta$. This completes the proof of equality \eqref{eq:1evensequence}.

Now it remains to show that the limit over odd values $2t+3$ is the same:
\begin{equation}\label{eq:1oddsequence}
 \lim_{t\to \infty}\lambda_k(G_{2t+3})=\lim_{t\to \infty}\lambda_k(H_t).
\end{equation}
 Let $G'$ be the graph obtained from $G$ by subdividing each edge $uv\in S$ once; let $w$ be the vertex of subdivision. Let $S'$ be the subset of edges of $G'$ that consists of $uw$ for each $uv\in S$. Let $H_t' = H_t(S')$ be the graph constructed from $G'$. Our discussion above shows that
 \begin{equation}\label{eq:1odd1}
  \lim_{t\to \infty}\lambda_k(G'_{2t+2})=\lim_{t\to \infty}\lambda_k(H_t').   
 \end{equation}
Now observe that $H_t$ is an induced subgraph of $H_t'$, and $H_t'$ is an induced subgraph of $H_{t+1}$. So by Interlacing Theorem, 
\begin{equation}\label{eq:1odd2}
    \lambda_k(H_t)\le \lambda_k(H_t')\le \lambda_k(H_{t+1}).
\end{equation}
Equality \eqref{eq:1oddsequence} now follows from \eqref{eq:1odd1}, \eqref{eq:1odd2} and the fact that $G_{2t+2}'=G_{2t+3}$. This completes the proof of the theorem.
\end{proof}

\autoref{thm:main2} can be proved analogously. We give the details in the Appendix.

\section{Applications}\label{section:applications}
In this section, we use our main results to determine the exact eigenvalue limits in the special case when all edges of a graph are subdivided. 

Consider a graph $G$, and subdivide each of its edges $t$ times. This generates a sequence $\{G_t=G_t(E(G))\}_{t\in \mathbb{N}}$  of graphs, which may or may not converge in the graph sense \cite{Mohar_1982}. But by our main results, we know that $k$-th largest and $k$-th smallest eigenvalues of $G_t$ do converge. 

We first consider the star graph $K_{1,d}$ on $d+1$ vertices. We start by finding the limit of the spectral radius of $(K_{1,d})_t$; these graphs are called \emph{spiders} or \emph{star-like trees}, and their spectral radii are shown to be less than $d/\sqrt{d-1}$, see \cite{Oboudi_2018}. We prove that this bound is indeed the limit of $\lambda_1((K_{1,d})_t)$ as $t \to \infty.$ To this end, we need a useful relation between characteristic polynomials and walk-generating functions.

For a graph $G$ with adjacency matrix $A$, the characteristic polynomial of $G$ is given by
\[\phi(G,x) = \det(xI - A).\]
For any vertex $a$ in $G$, the number of closed walks of length $\ell$ and starting at $a$ is given by $A^{\ell}_{aa}$. It is helpful to consider the walk-generating function
\[W_{aa}(G, x) = I_{aa}+xA_{aa} + x^2A^2_{aa} + \cdots = (I-xA)^{-1}_{aa}.\]

\begin{theorem}[\cite{Godsil_1993}, Ch.5]\label{thm:ratfun}
Let $G$ be a graph. Let $W_{aa}(G, x)$ be the generating function of closed walks at vertex $a$ in $G$. Let $\phi(G, x)$ be the characteristic polynomial of $G$. Then
\[\frac{\phi(G\backslash a, x)}{\phi(G, x)}=x^{-1} W_{aa}(G, x^{-1}).\]
Moreover, $\phi(G\backslash a, x)/\phi(G, x)$ is decreasing in every interval where it is defined.
\end{theorem}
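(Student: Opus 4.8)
The plan is to deduce the identity from Cramer's rule applied to the resolvent $(xI-A)^{-1}$, and then to read off the monotonicity from the spectral (partial-fraction) decomposition of that resolvent.

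\emph{Step 1: the rational identity.} Write $A=A(G)$ and, for a square matrix $M$, let $M[a]$ denote $M$ with its row and column indexed by $a$ deleted. For $x$ not an eigenvalue of $A$, Cramer's rule (equivalently, the adjugate formula) gives $\bigl((xI-A)^{-1}\bigr)_{aa} = \det\bigl((xI-A)[a]\bigr)/\det(xI-A)$. Since $(xI-A)[a] = xI - A(G\backslash a)$, the numerator is exactly $\phi(G\backslash a,x)$, so $\bigl((xI-A)^{-1}\bigr)_{aa} = \phi(G\backslash a,x)/\phi(G,x)$. On the other hand, for $|x|$ small (or simply as a formal power series) we have $W_{aa}(G,x) = \sum_{\ell\ge 0}(A^\ell)_{aa}x^\ell = \bigl((I-xA)^{-1}\bigr)_{aa}$, and for $x\neq 0$ the factorization $I-xA = x\,(x^{-1}I - A)$ yields
\[
\bigl((I-xA)^{-1}\bigr)_{aa} = x^{-1}\bigl((x^{-1}I-A)^{-1}\bigr)_{aa} = x^{-1}\,\frac{\phi(G\backslash a,x^{-1})}{\phi(G,x^{-1})}.
\]
Substituting $x\mapsto x^{-1}$ gives $x^{-1}W_{aa}(G,x^{-1}) = \phi(G\backslash a,x)/\phi(G,x)$, and since both sides are rational functions agreeing on infinitely many points (equivalently, near $\infty$), they coincide identically.

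\emph{Step 2: monotonicity.} Let $u_1,\dots,u_n$ be an orthonormal eigenbasis of $A$ with $Au_i=\lambda_i u_i$, and let $(u_i)_a$ be the coordinate of $u_i$ at $a$. Using the spectral expansion $xI-A=\sum_i(x-\lambda_i)u_iu_i^T$ inside the resolvent,
\[
\frac{\phi(G\backslash a,x)}{\phi(G,x)} = \bigl((xI-A)^{-1}\bigr)_{aa} = \sum_{i=1}^{n}\frac{(u_i)_a^2}{x-\lambda_i} = \sum_{\mu}\frac{c_\mu}{x-\mu},
\]
where the last sum runs over the \emph{distinct} eigenvalues $\mu$ of $A$ and $c_\mu = \sum_{i:\lambda_i=\mu}(u_i)_a^2 \ge 0$; note $\sum_\mu c_\mu = \sum_i(u_i)_a^2 = 1$, so at least one $c_\mu$ is positive, and the poles that actually survive in $\phi(G\backslash a,x)/\phi(G,x)$ are precisely the $\mu$ with $c_\mu>0$. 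Differentiating term by term,
\[
\frac{d}{dx}\,\frac{\phi(G\backslash a,x)}{\phi(G,x)} = -\sum_{\mu}\frac{c_\mu}{(x-\mu)^2} < 0
\]
at every point of the domain, so the function is (strictly) decreasing on every interval of $\mathbb{R}$ on which it is defined.

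\emph{Main obstacle.} No step is technically deep; the two points that want a line of care are the passage from the power-series identity for $W_{aa}$ to an identity of rational functions (handled by agreement near $\infty$), and the bookkeeping when $\phi(G,x)$ and $\phi(G\backslash a,x)$ have a common root so that the quotient has fewer poles than $\deg\phi(G,x)$ — the partial-fraction form with nonnegative coefficients absorbs this, since a cancelled pole corresponds exactly to $c_\mu=0$. One could instead prove Step 1 combinatorially, expanding $W_{aa}(G,x)$ over closed walks at $a$ and invoking a Schwenk-type deletion recursion for $\phi$, but the resolvent argument above is the most economical.
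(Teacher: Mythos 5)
Your proposal is correct: the Cramer's-rule computation of $\bigl((xI-A)^{-1}\bigr)_{aa}$ combined with the partial-fraction (spectral) expansion with nonnegative coefficients $c_\mu$ establishes both the identity and the strict monotonicity, and your handling of cancelled poles via $c_\mu=0$ is the right bookkeeping. The paper itself offers no proof of this statement — it is quoted from Godsil's book, Ch.~5 — and your resolvent argument is essentially the standard proof given in that source, so there is nothing to reconcile.
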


We apply the above theorem to the paths.
\begin{lemma}\label{lem:ratlim}
Let $\phi(P_t, x)$ be the characteristic polynomial of $P_t$. For $x\in(2,\infty)$, we have
\[\lim_{t\to \infty}\frac{\phi(P_{t-1}, x)}{\phi(P_t, x)} = \frac{1}{2}(x-\sqrt{x^2-4}).\]
\end{lemma}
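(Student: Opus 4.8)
The plan is to use the standard three-term recurrence for the characteristic polynomials of paths. Expanding $\phi(P_t,x)=\det(xI-A(P_t))$ along the last vertex gives
\[\phi(P_t,x)=x\,\phi(P_{t-1},x)-\phi(P_{t-2},x),\]
with $\phi(P_0,x)=1$ and $\phi(P_1,x)=x$. Setting $r_t=\phi(P_{t-1},x)/\phi(P_t,x)$, this recurrence rewrites as $1/r_t = x - r_{t-1}$, i.e. $r_t = 1/(x-r_{t-1})$. The claimed limit $r=\tfrac12(x-\sqrt{x^2-4})$ is precisely the smaller root of the fixed-point equation $r = 1/(x-r)$, equivalently $r^2-xr+1=0$; note that for $x>2$ both roots are real and positive, the larger one being $\tfrac12(x+\sqrt{x^2-4})>1$ and the smaller one lying in $(0,1)$.

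First I would note that for $x>2$ we have $\lambda_1(P_t)<2<x$ for all $t$, so $\phi(P_t,x)>0$ and every $r_t$ is well-defined and positive; moreover \autoref{thm:ratfun} (applied with $G=P_t$ and $a$ an endpoint, so that $G\backslash a=P_{t-1}$) tells us $r_t = \phi(P_t\backslash a,x)/\phi(P_t,x) = x^{-1}W_{aa}(P_t,x^{-1})$, which is a sum of nonnegative terms that is nondecreasing in $t$ (adding vertices only adds closed walks at $a$), and is bounded above — e.g. by comparison with the infinite path, or simply because $W_{aa}(P_t,x^{-1})\le W_{aa}(\mathbb{Z},x^{-1})<\infty$ for $x>2$. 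Hence $\{r_t\}$ converges to some limit $r\in[0,1]$. Passing to the limit in $r_t=1/(x-r_{t-1})$ gives $r=1/(x-r)$, so $r$ solves $r^2-xr+1=0$; since $r\le 1<\tfrac12(x+\sqrt{x^2-4})$, it must be the smaller root $\tfrac12(x-\sqrt{x^2-4})$, as claimed.

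Alternatively, and perhaps cleaner to write, I would solve the linear recurrence explicitly: the characteristic equation of $y_t=xy_{t-1}-y_{t-2}$ is $z^2-xz+1=0$ with roots $\alpha=\tfrac12(x+\sqrt{x^2-4})$ and $\beta=\tfrac12(x-\sqrt{x^2-4})$, where $\alpha\beta=1$ and $\alpha>1>\beta>0$ for $x>2$. Writing $\phi(P_t,x)=c_1\alpha^{t+1}+c_2\beta^{t+1}$ and using the initial data to pin down $c_1,c_2$ (both nonzero, with $c_1>0$), we get
\[\frac{\phi(P_{t-1},x)}{\phi(P_t,x)}=\frac{c_1\alpha^{t}+c_2\beta^{t}}{c_1\alpha^{t+1}+c_2\beta^{t+1}}\longrightarrow\frac{1}{\alpha}=\beta=\frac12(x-\sqrt{x^2-4})\]
as $t\to\infty$, since $|\beta/\alpha|<1$ forces the $\beta$-terms to vanish relative to the $\alpha$-terms.

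The only genuine subtlety — and the one place to be careful — is ensuring the denominator never vanishes and that the dominant coefficient $c_1$ is nonzero, so that the ratio is defined for all $t$ and the limit computation is legitimate; this is exactly where the hypothesis $x>2$ (equivalently $x$ outside the spectrum of every $P_t$, indeed outside $[-2,2]\supseteq\bigcup_t\mathrm{spec}(P_t)$) is used, and it also follows transparently from the monotone-bounded walk-generating-function argument via \autoref{thm:ratfun}. Everything else is routine.
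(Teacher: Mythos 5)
Your proposal is correct, and it is worth comparing with the paper's own argument, which uses the same two ingredients (\autoref{thm:ratfun} and the three-term recurrence) but assembles them differently. The paper proves convergence by viewing $\phi(P_{t-1},x)/\phi(P_t,x)=x^{-1}W_{11}(P_t,x^{-1})$ as a formal power series whose initial coefficients stabilize ($P_m$ and $P_n$ have the same closed walk counts at an endpoint up to length $2n-1$), and then it selects the correct root of $r=1/(x-r)$ by invoking the monotonicity in $x$ from \autoref{thm:ratfun}: the limit must be non-increasing on $(2,\infty)$, which rules out the increasing root $\tfrac12(x+\sqrt{x^2-4})$. Your first variant stays within the same walk-generating-function framework but replaces the formal-series Cauchy argument with a pointwise monotone-bounded argument (nonnegative walk counts increase with $t$), which is more direct; its one soft spot is root selection: the comparison with the doubly infinite path gives boundedness of $r_t$ but not $r\le 1$, so you should add the one-line induction $r_t=1/(x-r_{t-1})<1$ (valid since $x-r_{t-1}>x-1>1$ and $r_1=1/x<1$), or fall back on the paper's monotonicity-in-$x$ argument. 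Your second variant — writing $\phi(P_t,x)=(\alpha^{t+1}-\beta^{t+1})/(\alpha-\beta)$ with $\alpha\beta=1$, $\alpha>1>\beta>0$, and letting the $\alpha$-term dominate — is genuinely different and more elementary: it bypasses \autoref{thm:ratfun} altogether, gives positivity of $\phi(P_t,x)$ and an exponential rate of convergence for free, and is fully rigorous as written. What the paper's route buys is that it reuses machinery already needed elsewhere (the decreasing-ratio statement of \autoref{thm:ratfun} reappears in the proof of \autoref{lemma:spider}), whereas your closed-form route is the cleaner self-contained proof of this particular lemma.
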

\begin{proof}
Let $W_{11}(P_t, x)$ be the generating function of closed walks at the first vertex of $P_t$. By \autoref{thm:ratfun}, 
\[\frac{\phi(P_{t-1}, x)}{\phi(P_t, x)} = x^{-1} W_{11}(P_t, x^{-1}),\]
which is a formal power series. We show that it converges as $t \to \infty$. Indeed, for any positive integers $m$ and $n$ with $m\ge n$, the paths $P_m$ and $P_n$ have the same number of closed walks of length $\ell$ at the first vertex, provided that $\ell \le 2n-1$. Hence, $W_{11}(P_m, x)$ and $W_{11}(P_n, x)$ agree at the first $2n$ terms. It follows that $\{\phi(P_{t-1}, x)/\phi(P_t, x)\}_{t\ge 2}$ is Cauchy and converges to a limit as a formal power series.

Next, note that $\phi(P_t, x)$ satisfies a three-term recurrence:
\[\phi(P_{t+1}, x)=x\phi(P_t, x) - \phi(P_{t-1}, x),\]
which implies 
\[\frac{\phi(P_{t+1}, x)}{\phi(P_t, x)} = x - \frac{\phi(P_{t-1},x)}{\phi(P_t, x)}.\]
Thus when $x>2$, we can solve for the limit of $\phi(P_{t-1}, x)/\phi(P_t, x)$; in particular, since the ratio $\phi(P_{t-1}, x)/\phi(P_t, x)$ is decreasing in $(2,\infty)$, the limit must be $\dfrac{1}{2}(x-\sqrt{x^2-4})$.
\end{proof}

Now we are ready to find the limit of the spectral radii of the spiders $(K_{1,d})_t$.
\begin{lemma}\label{lemma:spider} For every $d\ge 2$, we have
\[\lim_{t\to\infty} \lambda_1 ((K_{1,d})_t) = \frac{d}{\sqrt{d-1}}.\]
\end{lemma}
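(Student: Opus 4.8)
The plan is to compute $\lambda_1((K_{1,d})_t)$ via the characteristic polynomial of the spider, express that polynomial in terms of path polynomials using the standard deletion formula at the center vertex, and then extract the limiting equation for the spectral radius from \autoref{lem:ratlim}.

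First I would set up notation: the spider $(K_{1,d})_t$ has a center vertex $c$ of degree $d$, with $d$ pendant paths each of length $t$ (so $t$ edges, $t-1$ internal vertices plus the leaf) attached at $c$. Deleting $c$ leaves $d$ disjoint copies of $P_t$. Using the classical formula for the characteristic polynomial of a graph in terms of a cut vertex (or equivalently the ``path-tree'' expansion, see \cite{Godsil_1993}), one gets
\[
\phi((K_{1,d})_t, x) = x\,\phi(P_t,x)^d - d\,\phi(P_{t-1},x)\,\phi(P_t,x)^{d-1}.
\]
Since $\phi(P_t,x)\ne 0$ for $x>2$ (the largest root of $\phi(P_t,x)$ is $2\cos(\pi/(t+1))<2$), the spectral radius $\rho_t := \lambda_1((K_{1,d})_t)$, which exceeds $2$ for all $t\ge 2$ because $(K_{1,d})_t$ properly contains a path, is the largest root of
\[
x = d\,\frac{\phi(P_{t-1},x)}{\phi(P_t,x)}.
\]
I would justify $\rho_t>2$ by noting $K_{1,3}$ is an induced subgraph (for $d\ge 3$) with $\lambda_1=\sqrt3$... actually more robustly: $(K_{1,d})_t$ contains a path on more than... hmm, that only gives $\rho_t<2$ is false — better: by Perron--Frobenius and the fact that the tree is not a path when $d\ge3$, and for $d=2$ the spider is just $P_{2t+1}$ so $\lambda_1<2$ and the claimed limit $\frac{2}{\sqrt1}=2$ still holds as a limit. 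So I would handle $d=2$ separately (it is immediate) and assume $d\ge 3$, where $\rho_t>2$.

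Next I would pass to the limit. By \autoref{thm:ratfun}, $g_t(x) := \phi(P_{t-1},x)/\phi(P_t,x)$ is decreasing on $(2,\infty)$, and by \autoref{lem:ratlim} it converges pointwise on $(2,\infty)$ to $g(x) := \tfrac12(x-\sqrt{x^2-4})$, which is itself continuous and strictly decreasing there with $g(2^+)=1$ and $g(\infty)=0$. Hence the function $x - d\,g(x)$ is strictly increasing in $x$ on $(2,\infty)$, so the equation $x = d\,g(x)$ has a unique solution $x^\ast$ there; solving $x^\ast = d\cdot\tfrac12(x^\ast - \sqrt{(x^\ast)^2-4})$ gives $(2-d)x^\ast = -d\sqrt{(x^\ast)^2-4}$, and squaring yields $(d-2)^2 (x^\ast)^2 = d^2((x^\ast)^2-4)$, i.e. $(x^\ast)^2(d^2-(d-2)^2) = 4d^2$, so $(x^\ast)^2 = \frac{4d^2}{4(d-1)} = \frac{d^2}{d-1}$, giving $x^\ast = d/\sqrt{d-1}$ (the positive root, consistent with $2-d<0$).

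Finally I would argue $\rho_t \to x^\ast$. This is the step requiring a little care: from pointwise convergence of the decreasing functions $g_t$ to the continuous $g$, Dini's theorem gives uniform convergence on any compact subinterval of $(2,\infty)$; alternatively, monotone convergence of decreasing functions to a continuous limit is automatically uniform on compacts. Fix a small $\delta>0$ and the compact interval $[x^\ast-\delta, x^\ast+\delta]\subset(2,\infty)$. On it $h_t(x):=x-d\,g_t(x)\to h(x):=x-d\,g(x)$ uniformly; since $h$ is continuous and strictly increasing with $h(x^\ast)=0$, we have $h(x^\ast-\delta)<0<h(x^\ast+\delta)$, so for all large $t$, $h_t(x^\ast-\delta)<0<h_t(x^\ast+\delta)$, forcing the root $\rho_t$ of $h_t$ (which exists in $(2,\infty)$ and is unique there by the same monotonicity argument applied to $h_t$ — note $g_t$ is decreasing so $h_t$ is increasing) to lie in $(x^\ast-\delta,x^\ast+\delta)$. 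Since $\delta$ was arbitrary, $\rho_t\to x^\ast = d/\sqrt{d-1}$. The main obstacle is precisely this transfer from convergence of the ratio functions to convergence of their roots; everything hinges on the monotonicity of $g_t$ from \autoref{thm:ratfun}, which makes the uniform-convergence/root-trapping argument clean, so I would make sure to invoke that monotonicity explicitly rather than trying to estimate $\phi(P_t,x)$ directly.
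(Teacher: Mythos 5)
Your proposal is correct and follows essentially the same route as the paper: it reduces to the same eigenvalue equation $x/d=\phi(P_{t-1},x)/\phi(P_t,x)$ (you via the vertex-deletion formula at the centre, the paper via the weighted quotient path $P_{d,t+1}$) and then applies \autoref{thm:ratfun} and \autoref{lem:ratlim}, your root-trapping step merely making explicit the limit transfer that the paper leaves implicit. The only slip, the claim that $\lambda_1((K_{1,d})_t)>2$ for all $t\ge 2$ when $d\ge 3$ (false for $d=3$, $t=2$, where the spider is $\widetilde{E}_6$ with $\lambda_1=2$), is harmless: only large $t$ matters for the limit, and your interval-trapping argument itself produces a root of $h_t$ in $(2,\infty)$, which is an eigenvalue exceeding $2$ and hence, by the monotonicity of $h_t$, must equal $\lambda_1((K_{1,d})_t)$.
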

\begin{proof}
When $d=2$, $(K_{1,2})_t=P_{2t+1}$ and so $\lim_{t\to\infty} \lambda_1 ((K_{1,d})_t) = \lim_{t\to\infty} \lambda_1 (P_{2t+1}) =2.$

Now suppose $d>2$. Let $P_{d, t+1}$ be a weighted path on vertices $\{0,1,\cdots, t\}$, where the first edge $(0,1)$ has weight $\sqrt{d}$, and all other edges have weight $1$. Then $P_{d, t+1}$ is a quotient graph of $(K_{1,d})_t$, so they have the same spectral radius. Consider the characteristic polynomial $\phi(P_{d, t+1}, x)$ of $P_{d,t+1}$. Expanding $\det(xI-A(P_{d,t+1}))$ gives
\[\phi(P_{d,t+1}, x) = x\phi(P_t, x) - d\phi(P_{t-1}, x).\]
Thus, $\lambda$ is an eigenvalue of $P_{d,t}$ if and only if it is a root of 
\[\frac{x}{d} = \frac{\phi(P_{t-1}, x)}{\phi(P_t, x)}.\]
Note that the right-hand side is a rational function that strictly decreases in every interval where it is defined and has exactly one branch in $(2, \infty)$. By \autoref{lem:ratlim}, when $x>2$, the right hand side converges to the value
\[\frac{1}{2}(x-\sqrt{x^2-4}).\]
Thus the limit of the spectral radius of $P_{d,t}$, as $t \to \infty$, is the positive root of 
\[\frac{x}{d} = \frac{1}{2} (x-\sqrt{x^2-4})\]
that is, $d/\sqrt{d-1}$.
\end{proof}

We now know the limits of (almost) all eigenvalues of $(K_{1,d})_t$ as $t\to \infty$. We summarize them below.
\begin{proposition}\label{prop:spiderlimits}
We have 
\begin{equation}\label{eq:spider1}
    \lim_{t\to\infty} \lambda_1 ((K_{1,d})_t) =-\lim_{t\to\infty} \lambda_{|(K_{1,d})_t|} ((K_{1,d})_t)=
\begin{cases}
    2 \text{ if }d=1,2,\\
    \frac{d}{\sqrt{d-1}} \text{ if } d>2.
\end{cases}
\end{equation}
And for any fixed $k\ge 2$,
\begin{equation}\label{eq:spider2}
    \lim_{t\to\infty} \lambda_k ((K_{1,d})_t)=-\lim_{t\to\infty} \lambda_{|(K_{1,d})_t|-k+1} ((K_{1,d})_t)=2. 
\end{equation}
\end{proposition}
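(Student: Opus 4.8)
The plan is to separate the case $k=1$ (equation~\eqref{eq:spider1}) from the case $k\ge 2$ (equation~\eqref{eq:spider2}), and to dispose of all claims about the \emph{smallest} eigenvalues at once: each $(K_{1,d})_t$ is a tree, hence bipartite, so its adjacency spectrum is symmetric about $0$, giving $\lambda_{|(K_{1,d})_t|-k+1}((K_{1,d})_t)=-\lambda_k((K_{1,d})_t)$ for every $k$ and $t$; thus it suffices to compute the limits of the $k$-th largest eigenvalues. For $k=1$ there is essentially nothing new: when $d\ge 2$ the limit $d/\sqrt{d-1}$ (which equals $2$ when $d=2$) is exactly \autoref{lemma:spider}, and when $d=1$ we have $(K_{1,1})_t=P_{t+1}$, whose largest eigenvalue $2\cos\frac{\pi}{t+2}$ tends to $2$.

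For $k\ge 2$ I would establish $\lim_{t\to\infty}\lambda_k((K_{1,d})_t)=2$ by a two-sided interlacing argument. For the lower bound, I would exhibit in $(K_{1,d})_t$ an induced path on $m_t$ vertices with $m_t\to\infty$: if $d\ge 2$ the path running through two of the legs and the center is an induced $P_{2t+1}$, and if $d=1$ the whole graph is $P_{t+1}$. The Interlacing Theorem then gives $\lambda_k((K_{1,d})_t)\ge\lambda_k(P_{m_t})=2\cos\frac{k\pi}{m_t+1}$, which tends to $2$ for fixed $k$. For the upper bound, I would delete the central vertex $c$ of the spider; the remaining graph $(K_{1,d})_t-c$ is a disjoint union of $d$ copies of $P_t$, namely the $t$ non-central vertices of each leg. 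Since this is a principal submatrix of $A((K_{1,d})_t)$ of size one smaller, interlacing yields $\lambda_k((K_{1,d})_t)\le\lambda_{k-1}\bigl((K_{1,d})_t-c\bigr)=\lambda_{\lceil(k-1)/d\rceil}(P_t)=2\cos\frac{\lceil(k-1)/d\rceil\pi}{t+1}$, which also tends to $2$ for fixed $k$. Combining the two bounds and invoking the symmetry of the spectrum proves~\eqref{eq:spider2}.

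The argument is short and relies only on interlacing and the explicit spectrum of a path, so I do not expect a genuine obstacle; the only points needing care are the bookkeeping in the two interlacing steps — choosing the right induced subgraph and tracking the shift of the eigenvalue index — and checking the degenerate cases $d\in\{1,2\}$, where the spider is itself a path. I note that no appeal to \autoref{thm:main1} or \autoref{thm:main2} is required here; alternatively, one could deduce~\eqref{eq:spider2} from \autoref{thm:main1} using the identity $H_t(E(K_{1,d}))=(K_{1,d})_t\sqcup d\cdot P_{t+1}$ together with the fact that $(K_{1,d})_t$ has at most one eigenvalue exceeding $2$, but the direct estimate above is cleaner.
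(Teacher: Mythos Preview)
Your proof is correct and follows essentially the same route as the paper: use bipartiteness to reduce to the top eigenvalues, invoke \autoref{lemma:spider} for $k=1$, and for $k\ge 2$ sandwich $\lambda_k((K_{1,d})_t)$ between eigenvalues of an induced path (below) and of the graph with the center deleted (above). The paper's version of the sandwich is the slightly coarser chain $\lambda_k(P_t)\le\lambda_k((K_{1,d})_t)\le\lambda_2((K_{1,d})_t)\le\lambda_1(P_t)$, whereas you use the longer induced path $P_{2t+1}$ and track the exact index $\lceil(k-1)/d\rceil$ in the disjoint union $d\cdot P_t$; you also handle the degenerate case $d=1$ explicitly, which the paper glosses over since \autoref{lemma:spider} is stated only for $d\ge2$.
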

\begin{proof} Note that spider graphs are bipartite and hence its $k$-th largest and $k$-th smallest eigenvalues are equal. Then equality \eqref{eq:spider1} is immediate from \autoref{lemma:spider}. Now if $k\ge 2$ then \[\lambda_k(P_t)\le \lambda_k((K_{1,d})_t)\le \lambda_2((K_{1,d})_t)\le \lambda_1(P_t).\]
The last inequality follows by Interlacing Theorem since deleting the central vertex of the spider gives a collection of disjoint paths of length $t$. Then equality \eqref{eq:spider2} follows by Sandwich Theorem for limits.   
\end{proof}

Let $G$ be a graph with degree sequence $(d_1, d_2, \ldots, d_n)$ where $d_1\geq  d_2\geq  \cdots \geq d_n$. Let $G_t = G_t(E(G))$. Then $H_t$ is a disjoint union of the spiders $(K_{1,d_1})_t, (K_{1,d_2})_t,\ldots,(K_{1,d_n})_t.$ Then the above \autoref{prop:spiderlimits}, together with Theorems \ref{thm:main1} and \ref{thm:main2}, gives us \autoref{thm:main3}.

\section{Subdivision and eigenvalue interval multiplicity} 
\label{section:subdivision_multiplicity}

For a graph $G$, define $G_t$ and $H_t$ as in previous sections for a fixed subset $S\subseteq E(G)$. It is clear that the path $P_t$ is an induced subgraph of both $G_t$ and $H_t$; thus, by Interlacing Theorem, most eigenvalues of $G_t$ and $H_t$ lie inside the interval $[-2,2]$. So the eigenvalues of interest are the ones outside this interval.

In \cite{Haiman_2022}, the authors relaxed the notion of eigenvalue multiplicity to counting the number of eigenvalues in subsets of $\mathbb{R}$; this gives us a formulation to talk about eigenvalue multiplicities with regards to subdivision. Given $I \subset \mathbb{R}$, let $m_G(I)$ be the number of eigenvalues of $G$ (counting with multiplicities) that lie in $I$. In particular, 
$$ m_G(a,b) := \# \{i : \lambda_i(G) \in (a, b) \}. $$

As we are interested in eigenvalues outside the interval $[-2,2]$, we also have:
\begin{align*}
  m_G(2,\infty) & =\#\{i : \lambda_i(G) > 2\},\\
  m_G(-\infty,-2) & =\#\{i : \lambda_i(G) < -2\}.
\end{align*}

We first show that subdivision cannot decrease the number of eigenvalues of a graph inside the interval $(2,\infty)$.
\begin{proposition}\label{prop:sub_M} 
Let $G'$ be a graph obtained from the graph $G$ by subdividing an edge. If $\lambda_k(G)>2$, then $\lambda_k(G')>2.$ In other words,
     \[m_G(2,\infty)\leq m_{G'}(2,\infty).\]
\end{proposition}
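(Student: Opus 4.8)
The plan is to compare $G$ and $G'$ directly via the Courant–Fischer characterization of $\lambda_k$, using a $k$-dimensional test subspace built from the top eigenvectors of $G$. Let $uv$ be the edge of $G$ that is subdivided, and let $w$ be the new vertex in $G'$, so that $V(G') = V(G) \cup \{w\}$ and in $G'$ the vertex $w$ is adjacent to exactly $u$ and $v$ while the edge $uv$ is deleted. Suppose $\lambda_k(G) > 2$, and let $x^{(1)}, \dots, x^{(k)}$ be orthonormal eigenvectors of $A(G)$ for $\lambda_1(G) \ge \cdots \ge \lambda_k(G)$. For each $i$, define a vector $\tilde{x}^{(i)} \in \mathbf{R}^{V(G')}$ by keeping the old entries and setting the new entry at $w$ to be $\tilde{x}^{(i)}_w := \tfrac{1}{\lambda_i(G)}\bigl(x^{(i)}_u + x^{(i)}_v\bigr)$, i.e. the value forced by the eigenvalue equation at a degree-$2$ vertex. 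Let $W = \mathrm{span}\{\tilde{x}^{(1)}, \dots, \tilde{x}^{(k)}\}$.

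First I would check that $\dim W = k$: since the $\tilde{x}^{(i)}$ restrict to the linearly independent vectors $x^{(i)}$ on $V(G)$, they are linearly independent. Next comes the main computation: for $z = \sum_i c_i \tilde{x}^{(i)} \in W$ with corresponding $z_0 = \sum_i c_i x^{(i)} \in \mathbf{R}^{V(G)}$, I would evaluate the Rayleigh quotient $\langle z, A(G')z\rangle / \langle z, z\rangle$. In the numerator, $\langle z, A(G')z\rangle = \langle z_0, A(G)z_0\rangle - 2z_{0,u}z_{0,v} + 2z_w(z_{0,u}+z_{0,v})$, where the correction terms account for removing the edge $uv$ and adding the edges $uw$, $vw$. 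Writing $\langle z_0, A(G)z_0\rangle = \sum_i c_i^2 \lambda_i(G) \ge \lambda_k(G)\,\langle z_0,z_0\rangle$ and using $z_w = \tfrac{1}{\lambda}(z_{0,u}+z_{0,v})$ for a suitable "weighted" $\lambda$ — here one must be careful because the $c_i$ mix different eigenvalues, so $z_0$ need not satisfy a clean eigenvalue equation; instead I would set $s := z_{0,u}+z_{0,v}$, $p := z_{0,u}z_{0,v}$, bound things in terms of $\lambda_k := \lambda_k(G)$, and aim to show $\langle z, A(G')z\rangle > 2\langle z,z\rangle$, i.e. $\langle z_0,A(G)z_0\rangle - 2p + 2z_w s > 2(\langle z_0,z_0\rangle + z_w^2)$. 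Since $\langle z_0,A(G)z_0\rangle \ge \lambda_k\langle z_0,z_0\rangle > 2\langle z_0,z_0\rangle$, it suffices to show $-2p + 2z_w s - 2z_w^2 \ge 0$, i.e. $z_w s - z_w^2 \ge p$. Because $z_w$ need not equal $\tfrac{s}{\lambda_k}$ exactly, the cleanest route may instead be to define $\tilde{x}^{(i)}_w$ and then, rather than fighting the mixing, to argue as in the proof of Lemma~\ref{lem:c1}: bound each $|\langle \tilde{x}^{(i)}, A(G')\tilde{x}^{(j)}\rangle - \delta_{ij}\lambda_i(G)|$ and each $|\langle \tilde{x}^{(i)},\tilde{x}^{(j)}\rangle - \delta_{ij}|$, then conclude $\min_{z\in W, \|z\|=1}\langle z,A(G')z\rangle > 2$ provided the perturbation is controlled.

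A more robust alternative, which I would actually prefer, is to reduce to the already-proven machinery: iterate \autoref{thm:main1}'s ideas. Subdividing one edge of $S$ once takes $G = G_1(S)$ toward $G_3(S)$-type graphs; more directly, I would use the Exponential Decay Lemma is not needed here — instead observe that subdividing an edge $t$ times and letting $t \to \infty$ is governed by Theorems~\ref{thm:main1}--\ref{thm:main2}, but for a \emph{single} subdivision the cleanest tool is interlacing combined with the eigenvalue equation forcing the value at $w$. Concretely: let $z = \tilde{x}^{(k)}$, the single eigenvector extended as above; then $A(G')z$ agrees with $\lambda_k(G)z$ at every vertex of $V(G)$ except possibly $u$ and $v$ (where the edge $uv$ was removed and $uw$, $vw$ added), and agrees at $w$ by construction. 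So $z^T A(G') z = \lambda_k(G)(\|z_0\|^2) - 2 x^{(k)}_u x^{(k)}_v + 2 z_w (x^{(k)}_u + x^{(k)}_v) = \lambda_k(G)\|z_0\|^2 - 2x^{(k)}_u x^{(k)}_v + \tfrac{2}{\lambda_k(G)}(x^{(k)}_u+x^{(k)}_v)^2$. One checks this exceeds $2\|z\|^2 = 2\|z_0\|^2 + \tfrac{2}{\lambda_k(G)^2}(x^{(k)}_u+x^{(k)}_v)^2$ using $\lambda_k(G) > 2$ and the elementary inequality $\tfrac{s^2}{\lambda} - \tfrac{s^2}{\lambda^2} - 2p = \tfrac{s^2(\lambda-1)}{\lambda^2} - 2p \ge \tfrac{s^2(\lambda-1)}{\lambda^2} - \tfrac{s^2}{2} \ge 0$ when $\tfrac{\lambda-1}{\lambda^2} \ge \tfrac12$, which holds exactly on $1 \le \lambda \le $ ... — and here is where care is required, since $\tfrac{\lambda-1}{\lambda^2} \ge \tfrac12 \iff \lambda^2 - 2\lambda + 2 \le 0$, which is never true over the reals.

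Thus the naive single-vector estimate is too weak, and this is the main obstacle: a rank-one test space does not suffice, because the local edge swap genuinely can lower the Rayleigh quotient. The fix is to use the full $k$-dimensional space $W$ and exploit that $\lambda_1(G) \ge \cdots \ge \lambda_k(G) > 2$, so there is slack to spend, and to optimize the choice of $z_w$ — namely, do \emph{not} fix $z_w = \tfrac{1}{\lambda_i}(x_u+x_v)$, but instead append a \emph{free} coordinate and take the min over its value, which amounts to passing to a quotient / using interlacing in reverse. Cleanest of all: note $G'$ contains $G - uv$ as an induced subgraph on $V(G)$, and $G - uv$ relates to $G$ by a rank-two perturbation; more useful, $w$ has degree $2$, so by the standard "removing a path vertex" reduction, $\lambda$ is an eigenvalue of $G'$ iff it is an eigenvalue of the graph $G$ with the edge $uv$ replaced by a weighted edge of weight $\tfrac{1}{\lambda}$ (this is exactly the quotient-to-weighted-path trick used in \autoref{lemma:spider}). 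So I would finish by: (1) reducing $G'$ to $G$-with-a-weakened-edge $G^{(\lambda)}$ having weight $\tfrac1\lambda < \tfrac12$ on $uv$; (2) observing the $k$-th eigenvalue of a graph is monotone in edge weights on the range where it stays $>2$, hence $\lambda_k(G^{(2)})$ where the edge has its original weight $1$ dominates — wait, we want the reverse — rather, use that the function $\lambda \mapsto \lambda - (\text{$k$-th eigenvalue of } G \text{ with edge } uv \text{ weighted } \tfrac1\lambda)$ has a root exactly at the eigenvalues of $G'$, is continuous and increasing past $2$, and at $\lambda = \lambda_k(G)$ the weighted graph has edge weight $\tfrac{1}{\lambda_k(G)} < 1$ so its $k$-th eigenvalue is $\le \lambda_k(G)$ (weakening an edge cannot increase $\lambda_k$, by Courant–Fischer with the \emph{same} top-$k$ space of $G$ restricted), giving a sign change that forces a root $> 2$. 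I expect steps (2)–(3) — getting the monotonicity direction and the strictness right, and handling the case where the optimal test vectors have $x_u = x_v = 0$ (where $G'$ just gets an isolated-ish path vertex and the claim is trivial by interlacing) — to be the delicate part; the reduction in step (1) is routine cofactor expansion.
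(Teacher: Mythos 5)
Your proposal does not close; each of the three routes you sketch has a genuine gap. The fallback ``bound the perturbation as in the proof of \autoref{lem:c1}'' cannot work here: the smallness of the offending entries in that lemma comes from \autoref{lemma:decay}, i.e.\ from exponential decay along a \emph{long} internal path as $t\to\infty$, whereas here the edge is subdivided once and $x^{(i)}_u, x^{(i)}_v$ need not be small at all, so there is no controlled perturbation. Your final route is also flawed as stated: eliminating the degree-$2$ vertex $w$ (Schur complement of the $w$-entry of $xI-A(G')$) produces the rank-one correction $\tfrac1x (e_u+e_v)(e_u+e_v)^T$, i.e.\ not only an edge $uv$ of weight $1/x$ but also \emph{loops} of weight $1/x$ at $u$ and $v$, which your reduction omits; the monotonicity claim ``weakening an edge cannot increase $\lambda_k$'' is false for $k\ge 2$ (deleting the edge of $P_2$ raises $\lambda_2$ from $-1$ to $0$), and the Courant--Fischer justification you give yields a \emph{lower} bound on the weighted graph's $\lambda_k$, not an upper bound; and even granting a sign change, a root of $\lambda=\lambda_k(G^{(\lambda)})$ being an eigenvalue of $G'$ does not by itself show that $G'$ has at least $k$ eigenvalues above $2$ --- an index-counting step is missing. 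Finally, abandoning the single-vector estimate was premature in one respect but irrelevant in another: with your extension $z_w=\tfrac1\lambda(x_u+x_v)$ the inequality can in fact be rescued using the slack $\lambda-2>0$ together with $x_u^2+x_v^2\le 1$, but a rank-one test space can only bound $\lambda_1(G')$, and for $\lambda_k(G')$ your $k$-dimensional space suffers exactly the mixing problem you identified, because $\tfrac1{\lambda_i}(x^{(i)}_u+x^{(i)}_v)$ is not the same linear functional of the restriction for every $i$.

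The missing idea, which is how the paper argues, is to choose the new coordinate so that the cross terms cancel \emph{identically}: set $z^{(i)}_w:=x^{(i)}_u$ (copy the value at one endpoint of the subdivided edge). This is a fixed linear functional of the restriction, so for any $z=\sum_i c_i z^{(i)}$ with underlying unit vector $x=\sum_i c_i x^{(i)}$ one has $z_w=x_u$, and
\[
z^TA(G')z \;=\; x^TA(G)x-2x_ux_v+2z_w(x_u+x_v)\;=\;x^TA(G)x+2z_w^2\;\ge\;\lambda_k(G)+2z_w^2 ,
\]
whence $\dfrac{z^TA(G')z}{z^Tz}\ge\dfrac{\lambda_k(G)+2z_w^2}{1+z_w^2}>2$ since $\lambda_k(G)>2$. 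The extended vectors are linearly independent (their restrictions are), so Courant--Fischer applied to this $k$-dimensional space gives $\lambda_k(G')>2$ directly, with no perturbation estimates, no weighted-graph reduction, and no root-counting needed.
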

\begin{proof} Suppose $\lambda_k(G)>2$. Let the vertex of subdivision be $w$ in $G'$. Let $x^{(i)}$ be the normalized eigenvectors corresponding to $\lambda_i(G)$ for $1\leq i\leq k$. Define vectors $z^{(i)}$ such that 
\[z^{(i)}_a=\begin{cases} x^{(i)}_a \text{ if }a\neq w,\\
x^{(i)}_u \text{ if } a=w,
\end{cases}\]
where $u$ is one of the two neighbours of $w$. Let $U$ be the span of the vectors $z^{(i)}$ $(1\le i \le k)$ and let $z=\sum_i c_iz^{(i)}$ be a vector in $U$ such that $||z||^2=1+z_w^2$.  Take $x=\sum_i c_ix^{(i)}$. Then, 
\begin{align*}
   \frac{z^TA(G')z}{z^Tz}
   & = \frac{x^TA(G)x + 2z_w^2}{1 + z_w^2}\\
   & \geq \frac{\lambda_k(G)+ 2z_w^2 }{1 + z_w^2}\\
   & > \frac{2+ 2z_w^2}{1 + z_w^2} = 2.
\end{align*}
Thus by Courant-Fischer Theorem,
\[ \lambda_k(G')\ge \min_{z\in \substack{U}} \frac{z^TA(G')z}{z^Tz} > 2. \]
This completes the proof.
\end{proof}

The above proposition does not hold for $m_G(-\infty,-2)$. For example, let $G$ be the graph $C_4$ with a pendant edge. The following figure illustrates that  $m_G(-\infty,-2)$ can increase or decrease when $G$ is subdivided.

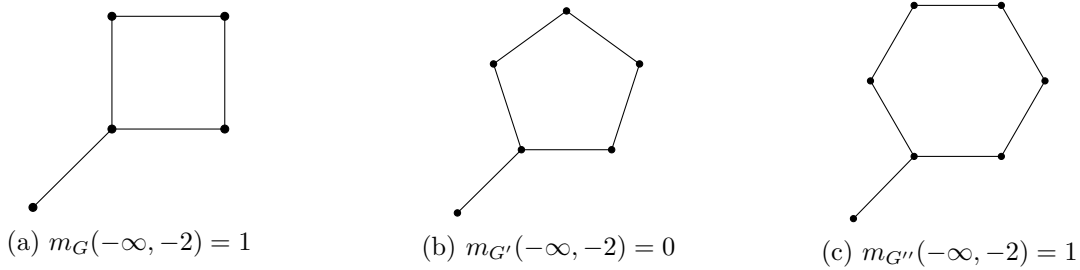
\begin{figure}[h!]
\begin{subfigure}{0.33\textwidth}
\centering
\begin{tikzpicture}[scale=0.75]
\draw  (0,2)-- (2,2);
\draw  (2,2)-- (2,0);
\draw  (2,0)-- (0,0);
\draw  (0,0)-- (0,2);
\draw  (0,0)-- (-1.4,-1.39);
\draw[fill=black] (0,2) circle (2pt);
\draw[fill=black] (2,2) circle (2pt);
\draw[fill=black]  (2,0) circle (2pt);
\draw[fill=black]  (0,0) circle (2pt);
\draw[fill=black] (-1.4,-1.39) circle (2pt);
\end{tikzpicture}
\caption{$m_{G}(-\infty,-2)=1$}
\end{subfigure}
\begin{subfigure}{0.33\textwidth}
\centering
\begin{tikzpicture}[scale=0.6]
\draw  (8,0)-- (10,0); 
\draw  (10,0)-- (10.618033988749895,1.9021130325903064);
\draw  (10.618033988749895,1.9021130325903064)-- (9,3.0776835371752522);
\draw  (9,3.0776835371752522)-- (7.381966011250105,1.9021130325903073);
\draw  (7.381966011250105,1.9021130325903073)-- (8,0);
\draw  (8,0)-- (6.582506095870898,-1.4015172463811394);   
\draw[fill=black]  (8,0) circle (2pt);
\draw[fill=black]  (10,0) circle (2pt);
\draw[fill=black]  (10.618033988749895,1.9021130325903064) circle (2pt);
\draw[fill=black]  (9,3.0776835371752522) circle (2pt);
\draw[fill=black]  (7.381966011250105,1.9021130325903073) circle (2pt);
\draw[fill=black] (6.582506095870898,-1.4015172463811394) circle (2pt);
\end{tikzpicture}
\caption{$m_{G'}(-\infty,-2)=0$}
\end{subfigure}
\begin{subfigure}{0.3\textwidth}
\centering
\begin{tikzpicture}[scale=0.58]
\draw  (16,0)-- (18,0);
\draw  (18,0)-- (19,1.7320508075688774);
\draw  (19,1.7320508075688774)-- (18,3.4641016151377553);
\draw  (18,3.4641016151377553)-- (16,3.4641016151377557);
\draw  (16,3.4641016151377557)-- (15,1.732050807568879);
\draw  (15,1.732050807568879)-- (16,0);
\draw  (16,0)-- (14.60929333999749,-1.427493904129122);
\draw[fill=black]  (16,0) circle (2pt);
\draw[fill=black]  (18,0) circle (2pt);
\draw[fill=black]  (19,1.7320508075688774) circle (2pt);
\draw[fill=black]  (18,3.4641016151377553) circle (2pt);
\draw[fill=black]  (16,3.4641016151377557) circle (2pt);
\draw[fill=black]  (15,1.732050807568879) circle (2pt);
\draw[fill=black] (14.60929333999749,-1.427493904129122) circle (2pt);
\end{tikzpicture}
\caption{$m_{G''}(-\infty,-2)=1$}
\end{subfigure}
\caption{Subdivisions of $G$}
\label{fig:1}
\end{figure}

We now consider the sequences $\{m_{G_t}(2,\infty)\}_{t\in \mathbb{N}}$, $\{m_{H_t}(2,\infty)\}_{t\in \mathbb{N}}$ and $\{m_{G_t}(-\infty,-2)\}_{t\in \mathbb{N}}$ and  $\{m_{H_t}(-\infty,-2)\}_{t\in \mathbb{N}}$. We first show that these sequences are (upper) bounded. 

\begin{proposition}\label{prop:Mbound} Let $Q=\{v\in G : \deg(v)\ge 3\}$. Then for any $t\in \mathbb{N}$, 
\[\max\{ m_{G_t}(2,\infty), m_{H_t}(2,\infty), m_{G_t}(-\infty,-2),  m_{H_t}(-\infty,-2) \} \leq |Q|.\] Moreover, this bound is tight.
\end{proposition}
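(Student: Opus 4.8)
The plan is to bound the number of eigenvalues outside $[-2,2]$ by the number $|Q|$ of high-degree vertices of $G$, using an interlacing/rank argument. First I would observe that both $G_t$ and $H_t$ are obtained from $G$ (or from a collection of spiders, in the case of $H_t$) by subdividing some edges, so every vertex of $G_t$ or $H_t$ that is \emph{not} a vertex of $Q$ has degree $\le 2$. The key structural fact is that if we delete the vertex set $Q$ from $G_t$ (resp. $H_t$), the remaining graph is a disjoint union of paths, each of which has spectral radius strictly less than $2$ (in fact every path $P_m$ satisfies $\lambda_1(P_m)=2\cos(\pi/(m+1))<2$). Hence the induced subgraph $G_t - Q$ has \emph{all} eigenvalues in the open interval $(-2,2)$, i.e. $m_{G_t-Q}(2,\infty)=0$ and $m_{G_t-Q}(-\infty,-2)=0$; the same holds for $H_t - Q$.

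Now I would apply the Interlacing Theorem with $A=A(G_t)$ (an $n\times n$ matrix, $n=|G_t|$) and $B=A(G_t-Q)$ (an $m\times m$ principal submatrix with $m = n - |Q|$). Interlacing gives $\theta_i \ge \lambda_{i+n-m}(G_t) = \lambda_{i+|Q|}(G_t)$ for all valid $i$. Taking $i$ so that we look at eigenvalues of $G_t$ of index larger than $|Q|$: for $j > |Q|$ we have $\lambda_j(G_t) \le \theta_{j-|Q|} \le \theta_1 < 2$. Therefore at most $|Q|$ eigenvalues of $G_t$ can exceed $2$, which is exactly $m_{G_t}(2,\infty) \le |Q|$. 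For the lower tail, use the other interlacing inequality $\lambda_i(G_t) \ge \theta_i$: for $j$ with $j \le m$, $\lambda_j(G_t) \ge \theta_j > -2$ whenever $\theta_j > -2$, which holds for all $j \le m$ since all eigenvalues of a disjoint union of paths lie in $(-2,2)$; hence $\lambda_{m+1}(G_t),\dots,\lambda_n(G_t)$ are the only eigenvalues that can possibly be $\le -2$, and there are $n-m=|Q|$ of them, giving $m_{G_t}(-\infty,-2)\le |Q|$. The identical argument applied to $H_t-Q$ (also a disjoint union of paths, since in $H_t$ every original vertex keeps its degree, every subdivision vertex has degree $\le 2$, and the only vertices of degree $\ge 3$ come from $Q$) yields the bounds for $H_t$. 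This proves the inequality in the statement.

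For the tightness claim, I would exhibit a family where the bound $|Q|$ is attained. A natural candidate: take $G$ to be a disjoint union of $|Q|$ stars $K_{1,3}$ (so every star contributes exactly one vertex of degree $3$, hence $|Q|$ equals the number of stars), and $S=E(G)$. Then $H_t$ is a disjoint union of $|Q|$ spiders $(K_{1,3})_t$, and by \autoref{lemma:spider} each spider has spectral radius tending to $3/\sqrt{2}>2$, so for $t$ large enough each of the $|Q|$ spiders contributes an eigenvalue $>2$, giving $m_{H_t}(2,\infty)=|Q|$; since these graphs are bipartite, $m_{H_t}(-\infty,-2)=|Q|$ as well. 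Then by \autoref{prop:sub_M} applied repeatedly (or by Theorems \ref{thm:main1} and \ref{thm:main2}) the corresponding $G_t$ also has $|Q|$ eigenvalues outside $[-2,2]$ for $t$ large. Alternatively, and more simply, one can just note that the bound is tight \emph{as stated} (for some graph and some $t$) by taking $G$ itself to be a single $K_{1,3}$ (or a suitable spider) with $t$ large.

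The main obstacle I anticipate is purely bookkeeping rather than conceptual: one must be careful that $G_t - Q$ and $H_t - Q$ are genuinely disjoint unions of \emph{paths} (not paths together with isolated edges or cycles) — this requires checking that removing $Q$ breaks every cycle of $G$, which is true because every cycle of $G_t$ or $H_t$ corresponds to a cycle of $G$ and must pass through a vertex of degree $\ge 3$ unless $G$ itself is a single cycle (in which case $Q=\emptyset$ and $m_{G_t}(2,\infty)=0=|Q|$, so the bound still holds trivially). Handling these degenerate cases ($G$ a cycle, $G$ a path, $G$ a union of such) explicitly will be the only slightly delicate point; otherwise the argument is a direct application of the Interlacing Theorem together with the elementary fact that paths have spectrum inside $(-2,2)$.
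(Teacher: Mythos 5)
Your proposal is correct and follows essentially the same route as the paper: delete the set $Q$ of high-degree vertices, observe that the remaining graph has all eigenvalues in $[-2,2]$, apply the Interlacing Theorem to conclude at most $|Q|$ eigenvalues lie outside $[-2,2]$, and obtain tightness from the spider limits when all edges are subdivided. The one simplification worth noting is that you do not need $G_t-Q$ or $H_t-Q$ to be a disjoint union of paths with spectrum strictly inside $(-2,2)$: since $m_{G_t}(2,\infty)$ and $m_{G_t}(-\infty,-2)$ count eigenvalues strictly outside $[-2,2]$, it suffices that these deleted graphs have maximum degree at most $2$ (hence are unions of paths and cycles, with spectrum in the closed interval $[-2,2]$), which removes the need for your case analysis on cycle components.
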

\begin{proof}
Let $G_t-Q$ denote the subgraph of $G_t$ induced by the vertices $V(G_t)\backslash Q$. By Interlacing Theorem,
\[\lambda_j(G_t) \ge \lambda_j(G_t-Q) \ge \lambda_{|Q|+j}(G_t).\]
For $j=1$ we have
\[\lambda_{|Q|+1}(G_t) \le \lambda_1(G_t-Q) \le 2,\]
and for $j=|G_t|-|Q|$ we have
\[\lambda_{|G_t|-|Q|}(G_t)\ge \lambda_{|G_t|-|Q|}(G_t-Q)\ge -2.\]
This shows that $|Q|$ is an upper bound for $m_{G_t}(2,\infty)$ and $m_{G_t}(-\infty,-2)$. Similar argument works for $m_{H_t}(2,\infty)$ and $m_{H_t}(-\infty,-2)$.

Now if $S=E(G)$ and $G_t=G_t(S)$, then $m_{G_t}(2,\infty)=|Q|$ for all large $t$ by \autoref{thm:main3}. This example shows that the bound  for $m_{G_t}(2,\infty)$ is tight. The same example works for other cases.
\end{proof}

It is now easy to see that $\{m_{H_t}(2,\infty)\}$ and $\{m_{H_t}(-\infty,-2)\}$ are non-decreasing bounded integer sequences. By virtue of Propositions \ref{prop:sub_M} and \ref{prop:Mbound}, we also conclude that $\{m_{G_t}(2,\infty)\}$ is a non-decreasing bounded integer sequence. Hence, the following result.
\begin{theorem}
There exists $t_0\in \mathbb{N}$ such that $m_{G_t}(2,\infty), m_{H_t}(2,\infty)$ and $ m_{H_t}(-\infty,-2)$ are constant for all $t\ge t_0$.
\end{theorem}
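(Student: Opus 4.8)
The plan is to establish that each of the three sequences $\{m_{G_t}(2,\infty)\}$, $\{m_{H_t}(2,\infty)\}$ and $\{m_{H_t}(-\infty,-2)\}$ is a non-decreasing sequence of non-negative integers that is bounded above, and then invoke the fact that such a sequence must be eventually constant. The upper bound $|Q|$ is already supplied by \autoref{prop:Mbound}, so the only work is monotonicity. For the two $H_t$ sequences, monotonicity is immediate: since $H_t$ is an induced subgraph of $H_{t+1}$, Interlacing gives $\lambda_k(H_{t+1}) \ge \lambda_k(H_t)$ for the $k$-th largest eigenvalue and $\lambda_{|H_{t+1}|-k+1}(H_{t+1}) \le \lambda_{|H_t|-k+1}(H_t)$ for the $k$-th smallest, so any eigenvalue of $H_t$ exceeding $2$ forces the correspondingly-indexed eigenvalue of $H_{t+1}$ to exceed $2$, and likewise below $-2$; hence $m_{H_t}(2,\infty) \le m_{H_{t+1}}(2,\infty)$ and $m_{H_t}(-\infty,-2)\le m_{H_{t+1}}(-\infty,-2)$.

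For $\{m_{G_t}(2,\infty)\}$ the argument is slightly less direct because the $G_t$ are not nested as induced subgraphs. Here I would observe that $G_{t+1}$ is obtained from $G_t$ by subdividing each edge of $S$ once more, i.e.\ by a finite sequence of single-edge subdivisions, and then apply \autoref{prop:sub_M} repeatedly: if $\lambda_k(G_t) > 2$ then subdividing one edge keeps the $k$-th largest eigenvalue above $2$, and iterating through all $|S|$ subdivisions shows $\lambda_k(G_{t+1}) > 2$. Therefore $m_{G_t}(2,\infty) \le m_{G_{t+1}}(2,\infty)$, so this sequence is also non-decreasing and bounded by $|Q|$.

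Finally, a non-decreasing integer sequence bounded above by $|Q|$ can increase at most $|Q|$ times, so each of the three sequences stabilizes; taking $t_0$ to be the maximum of the three stabilization times gives a single $t_0$ that works for all of them simultaneously.

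The argument is essentially routine given the earlier results, so there is no real obstacle; the one subtlety worth flagging is that the claim is deliberately stated only for $m_{G_t}(2,\infty)$ and not for $m_{G_t}(-\infty,-2)$, since \autoref{prop:sub_M} fails for the negative side (as the $C_4$-with-pendant-edge example shows), so the monotonicity step genuinely cannot be pushed through for $\{m_{G_t}(-\infty,-2)\}$ and that sequence is correctly omitted from the statement.
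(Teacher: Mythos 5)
Your proof is correct and follows essentially the same route as the paper: interlacing via the nested induced subgraphs $H_t\subseteq H_{t+1}$ for the two $H_t$ sequences, repeated application of \autoref{prop:sub_M} (noting $G_{t+1}$ arises from $G_t$ by $|S|$ single-edge subdivisions) for $\{m_{G_t}(2,\infty)\}$, and the bound from \autoref{prop:Mbound} to conclude that each non-decreasing bounded integer sequence stabilizes. Your closing remark about why $m_{G_t}(-\infty,-2)$ is omitted also matches the paper's discussion.
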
 

Unlike $m_{G_t}(2,\infty)$, which is a monotone sequence, the number of eigenvalues less than $-2$ may oscillate due to bipartiteness (see \autoref{fig:1}). In particular, it is not clear whether $m_{G_t}(-\infty, -2)$ stabilizes or not. 
\begin{conjecture}
There exists $t_0\in \mathbb{N}$ such that $m_{G_t}(-\infty, -2)$ is constant for all $t\ge t_0$.
\end{conjecture}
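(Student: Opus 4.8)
My plan is to reduce the conjecture to a single degenerate situation and then to attack that situation by a transfer-matrix (secular-equation) analysis near $\lambda=-2$.

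First I would reduce to a \emph{boundary block}. By \autoref{thm:main2}, for each fixed $k$ the limit $L_k:=\lim_{t\to\infty}\lambda_{|G_t|-k+1}(G_t)$ exists and equals $\lim_{t\to\infty}\lambda_{|H_t|-k+1}(H_t)$. Since $H_t$ is an induced subgraph of $H_{t+1}$ with $|H_{t+1}|-|H_t|=2|S|$, the Interlacing Theorem gives $\lambda_{|H_{t+1}|-k+1}(H_{t+1})\le\lambda_{|H_t|-k+1}(H_t)$, so the $k$-th smallest eigenvalue of $H_t$ is non-increasing in $t$ and therefore stays $\ge L_k$ for all $t$; consequently $m_{H_t}(-\infty,-2)$ increases to $c:=\#\{k:L_k<-2\}$, which is finite by \autoref{prop:Mbound}. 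For $G_t$, every index $k$ with $L_k\ne-2$ behaves well: if $L_k<-2$ then $\lambda_{|G_t|-k+1}(G_t)<-2$ for all large $t$, and if $L_k>-2$ then $\lambda_{|G_t|-k+1}(G_t)>-2$ for all large $t$. Hence $m_{G_t}(-\infty,-2)$ is eventually constant unless the boundary block $B:=\{k:L_k=-2\}$ is nonempty, and, because $L_k$ is non-decreasing in $k$, $B$ is an interval beginning at $c+1$; it therefore suffices to prove that the $(c{+}1)$-st smallest eigenvalue $\lambda_{|G_t|-c}(G_t)$ is $\ge-2$ for all large $t$, which (together with the automatic lower bound $m_{G_t}(-\infty,-2)\ge c$) forces $m_{G_t}(-\infty,-2)=c$ eventually. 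Two easy partial facts are worth recording: when $G_t$ is bipartite one has $m_{G_t}(-\infty,-2)=m_{G_t}(2,\infty)$, which is non-decreasing by \autoref{prop:sub_M} and hence already stabilises, so only non-bipartite $G_t$ need attention; and since $H_t$ is an induced subgraph of $G_{2t+2}$ with $|G_{2t+2}|-|H_t|=|S|$, interlacing already pins $m_{G_{2t+2}}(-\infty,-2)$ into the fixed window $[c,\,c+|S|]$ for all large $t$ (and likewise for odd subscripts, via the graph $G'$ used in the proof of \autoref{thm:main1}).

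For the remaining case I would set up a secular equation. For $|\lambda|>2$ let $r=r(\lambda)=\tfrac12\bigl(\lambda-\sqrt{\lambda^2-4}\bigr)$ be the root of $r+r^{-1}=\lambda$ with $|r|>1$. Every interior entry of a $\lambda$-eigenvector along a subdivided edge of $G_t$ is a combination of $r^{j}$ and $r^{-j}$, so one can eliminate all interior vertices (the Dirichlet-to-Neumann reduction already implicit in \autoref{lemma:spider} and \autoref{lem:ratlim}) and conclude that $\lambda\in(-\infty,-2)\cup(2,\infty)$ is an eigenvalue of $G_t$ exactly when $\det M_t(\lambda)=0$, where $M_t(\lambda)$ is a matrix of size at most $|V(G)|$ whose entries are rational in $\lambda$ and $r^{t}$. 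On compact subsets of $(-\infty,-2)$ we have $|r|>1$, so $r^{-t}\to0$ and $M_t\to M_\infty$ uniformly; the zeros of $\det M_\infty$ in $(-\infty,-2)$ are precisely the $L_k<-2$, and the hypothesis $L_{c+1}=-2$ says $\det M_\infty$ degenerates at $-2$. The idea is then to put $s:=\sqrt{-2-\lambda}$ (so $r=-1-s+O(s^2)$ and $\sqrt{\lambda^2-4}=2s+O(s^3)$), expand $\det M_t(-2-s^2)$ for $0<s\le s_0$ and large $t$, and show that its sign is eventually independent of $t$ and nonvanishing; this exhibits a fixed $s_0>0$ with no zero of $\det M_t$ in $(-2-s_0^2,\,-2)$ for all large $t$, which (choosing $s_0$ so small that every $L_k<-2$ lies below $-2-s_0^2$) gives $m_{G_t}(-\infty,-2)\le c$ eventually. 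One hopes to obtain the required sign by comparing $M_t$ for $G_t$ with the analogous matrix for $H_t$, whose boundary eigenvalue is already known to remain $\ge-2$.

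The hard part will be the point $\lambda=-2$ itself. There the $2\times2$ transfer matrix of the path has the double eigenvalue $-1$ with a nontrivial Jordan block, so $M_t$ is genuinely non-analytic at $-2$ (a square-root branch point) and, worse, $r^{-t}$ does \emph{not} tend to $0$ uniformly as $\lambda\to-2$: at $\lambda=-2$ one has $r^{-t}=(-1)^t$, so the convergence $M_t\to M_\infty$ collapses precisely on the boundary and the parity of $t$ enters the expansion. Ruling out that the critical root of $\det M_t$ slips below $-2$ for infinitely many $t$ of one parity is exactly where a new idea seems to be needed, and is presumably why the statement is only conjectured. I would also try the alternative of shrinking the window $[c,c+|S|]$ to $c$ directly, by a Courant--Fischer surgery that inserts or deletes the $|S|$ ``middle'' edges while exploiting the exponential decay (\autoref{lemma:decay}) and unimodality (\autoref{lemma:unimodal}) of the offending eigenvectors along the long internal paths; but the decay factor $\mu=|\lambda|-1$ also tends to $1$ as $\lambda\to-2$, so the surgery error need not beat the (simultaneously vanishing) gap $|\lambda+2|$, and the same boundary degeneracy obstructs the naive estimate.
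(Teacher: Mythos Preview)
The statement you are addressing is posed in the paper as an open \emph{conjecture}; the paper offers no proof. So there is nothing to compare your argument against, and the only question is whether your proposal actually settles the conjecture.

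It does not, and you say so yourself. Your first step---reducing to the situation where the $(c{+}1)$-st smallest eigenvalue has limit exactly $-2$---is sound: the limits $L_k$ exist by \autoref{thm:main2}, the $H_t$-sequence is monotone so $m_{H_t}(-\infty,-2)$ stabilises at $c=\#\{k:L_k<-2\}$, and for each $k\le c$ the $k$-th smallest eigenvalue of $G_t$ is eventually below $-2$. (Two minor remarks: since the proof of \autoref{thm:main2} shows every $L_k\le-2$, your case ``$L_k>-2$'' is vacuous, though harmless; and your observation about bipartite $G_t$ is of limited reach because bipartiteness of $G_t$ can itself oscillate with $t$, as in the example in \autoref{fig:1}.) This reduction is a correct and useful framing of where the difficulty lies.

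The genuine gap is exactly where you locate it. Your secular-equation plan requires controlling $\det M_t(\lambda)$ in a one-sided neighbourhood of $-2$, but the path transfer matrix has a nontrivial Jordan block at $\lambda=-2$, the convergence $M_t\to M_\infty$ is not uniform there, and the parity of $t$ enters the leading term of any local expansion---so your proposed sign argument has no force in precisely the regime that matters. Your alternative Courant--Fischer surgery fails for the reason you give: the decay rate $\mu=|\lambda|-1$ degenerates to $1$ as $\lambda\to-2$, so the error from inserting or deleting the $|S|$ middle edges need not be $o(|\lambda+2|)$. In short, you have correctly isolated the obstacle the paper alludes to, but you have not overcome it; after your proposal the conjecture remains open.
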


Since $H_t$ is an induced subgraph of $G_{2t+2}$ we have $\max_t m_{H_t}(2,\infty)\le \max_t m_{G_t}(2,\infty)$ and $\max_t m_{H_t}(-\infty, -2)\le \max_t m_{G_t}(-\infty, -2)$.
We believe that these inequalities are actually equalities and pose the following open problem.
\begin{conjecture} We have
 \[\max_t m_{H_t}(2,\infty)= \max_t m_{G_t}(2,\infty)\] and \[\max_t m_{H_t}(-\infty, -2)= \max_t m_{G_t}(-\infty, -2).\]   
\end{conjecture}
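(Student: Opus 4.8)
The inequality ``$\le$'' in each of the two equalities is already recorded in the text (via Interlacing, since $H_t$ is an induced subgraph of $G_{2t+2}$), so the task is to prove the reverse inequalities, and the natural strategy is to exhibit a single infinite limit object for both sequences and read off both suprema from it. Fix $S\subseteq E(G)$ and let $\Gamma$ be the infinite graph obtained from $G$ by deleting every edge $uv\in S$ and attaching a one-way infinite ray at $u$ and another at $v$; let $\mathcal A$ be its adjacency operator on $\ell^2(V(\Gamma))$. Then $H_t$ is literally the induced subgraph of $\Gamma$ obtained by truncating each ray to length $t$, so $A(H_t)=P_t\mathcal A P_t$ with $P_t\uparrow I$; and $G_t$ coincides with $\Gamma$ on every ball of radius less than $t/2$ about $Q=\{v:\deg_G(v)\ge 3\}$, differing only far inside the subdivided paths.

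The first step is to pin down the spectral picture of $\mathcal A$. Since $\mathcal A$ is a finite-rank perturbation of a direct sum of finitely many half-line adjacency operators, each with purely absolutely continuous spectrum $[-2,2]$, we have $\sigma_{\mathrm{ess}}(\mathcal A)=[-2,2]$, and $\mathcal A$ has only finitely many eigenvalues in each gap $(2,\infty)$ and $(-\infty,-2)$, none accumulating at $\pm 2$. Put $p=m_{\mathcal A}(2,\infty)$ and $p'=m_{\mathcal A}(-\infty,-2)$. The second step is $\max_t m_{H_t}(2,\infty)=p$: because $A(H_t)=P_t\mathcal A P_t$ with $P_t\uparrow I$, Courant--Fischer shows each $\lambda_j(H_t)$ increases to the $j$-th eigenvalue $\mu_j$ of $\mathcal A$; since $\mu_1,\dots,\mu_p>2$ are bounded away from $2$ while $\mu_{p+1}\le 2$, we get $\lambda_1(H_t),\dots,\lambda_p(H_t)>2\ge\lambda_{p+1}(H_t)$ for large $t$, hence $m_{H_t}(2,\infty)=p$ eventually. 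The third step is $\max_t m_{G_t}(2,\infty)=p$: the lower bound follows from \autoref{thm:main1}, since $\lim_t\lambda_j(G_t)=\lim_t\lambda_j(H_t)=\mu_j>2$ for $j\le p$ forces $\lambda_j(G_t)>2$ for large $t$; the upper bound is where the Exponential Decay Lemma (\autoref{lemma:decay}) enters --- an eigenvector of $G_t$ for an eigenvalue $\lambda\ge 2+\delta$ is exponentially small at the midpoints of all subdivided paths, so one cuts it there and glues the explicit geometric tails onto the rays of $\Gamma$ to obtain an $o(1)$-approximate unit eigenvector of $\mathcal A$ at $\lambda$; an orthonormal family of such eigenvectors maps to an almost-orthonormal family, so by the spectral theorem $G_t$ has at most $p$ eigenvalues exceeding $2+\delta$, for every $\delta>0$ and all large $t$. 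The same scheme, using \autoref{thm:main2}, is meant to give the $(-\infty,-2)$ statement with $p$ replaced by $p'$.

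The main obstacle --- and the reason this is stated only as a conjecture --- is the final passage from ``$G_t$ has at most $p$ eigenvalues above $2+\delta$ for every $\delta>0$'' to ``$G_t$ has at most $p$ eigenvalues above $2$'': a priori $G_t$ might carry a $(p+1)$-st eigenvalue $2+\varepsilon_t$ with $\varepsilon_t\downarrow 0$, in which case \autoref{thm:main1} forces $\lambda_{p+1}(H_t)\to 2$ as well, but from below, so that $m_{H_t}(2,\infty)$ stalls at $p$ while $m_{G_t}(2,\infty)$ reaches $p+1$. Ruling this out is a threshold analysis at $\lambda=2$: one must show that a sequence of near-threshold eigenvalues of $G_t$ produces, in the local limit, a genuine $\ell^2$ eigenfunction of $\mathcal A$ at $2$ (and symmetrically at $-2$), so that the count $p$ would have been wrong from the start. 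The difficulty is precisely that discreteness of $\sigma(\mathcal A)$ above $2$ does not by itself exclude a threshold \emph{resonance} --- a linearly bounded, non-$\ell^2$ solution of $\mathcal A\psi=2\psi$ --- which is exactly what a hovering near-$2$ eigenvalue of $G_t$ would mimic; separating resonances from bound states here seems to require a quantitative, uniform-in-$t$ spectral gap $\inf_t\lambda_p(G_t)>2$. For the $(-\infty,-2)$ equality there is the further complication, visible already in \autoref{fig:1}, that $\{m_{G_t}(-\infty,-2)\}$ need not be monotone, so its supremum is not a limit; there one would additionally need a bound $m_{G_t}(-\infty,-2)\le p'$ valid for every $t$, presumably via an interlacing argument relating $G_t$ to $\Gamma$ in the spirit of \autoref{prop:Mbound}.
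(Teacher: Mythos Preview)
The paper does \emph{not} prove this statement; it is posed as an open conjecture, with no argument offered beyond the one-line observation that the $\le$ direction follows from interlacing. There is therefore no ``paper's own proof'' against which to compare your attempt.

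Your write-up is not really a proof proposal but rather a strategy sketch together with an honest diagnosis of why it does not close, and on that level it is accurate. The infinite-graph model $\Gamma$ and its adjacency operator $\mathcal A$ are the natural limiting objects; your claims that $\sigma_{\mathrm{ess}}(\mathcal A)=[-2,2]$ (Weyl, finite-rank perturbation of half-lines) and that $\lambda_j(H_t)\uparrow\mu_j$ for each fixed $j$ (monotone compressions $P_t\mathcal A P_t$ with $P_t\uparrow I$) are correct, and they do give $\max_t m_{H_t}(2,\infty)=p$ cleanly, since $\lambda_{p+1}(H_t)\le\mu_{p+1}=2$ for \emph{every} $t$ by min--max for compressions. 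Likewise, the lower bound $\max_t m_{G_t}(2,\infty)\ge p$ follows already from \autoref{thm:main1}, and your approximate-eigenvector transplant via \autoref{lemma:decay} does yield the weaker statement that $G_t$ has at most $p$ eigenvalues above $2+\delta$ for each fixed $\delta>0$ and large $t$ --- though note that this much is already a consequence of \autoref{thm:main1} alone, since $\lambda_{p+1}(G_t)\to\lim_t\lambda_{p+1}(H_t)=2$.

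The gap you isolate is exactly the gap: nothing in the paper, and nothing in your outline, excludes $\lambda_{p+1}(G_t)\downarrow 2$ strictly from above. Because $m_{G_t}(2,\infty)$ is non-decreasing (\autoref{prop:sub_M}), a single value of $t$ with $\lambda_{p+1}(G_t)>2$ would already force $\max_t m_{G_t}(2,\infty)\ge p+1$, so one genuinely needs $\lambda_{p+1}(G_t)\le 2$ for \emph{all} $t$, not merely in the limit. Your framing of this as a threshold-resonance question for $\mathcal A$ at $\lambda=2$ is the right heuristic, and the analogous issue at $-2$, compounded by the non-monotonicity illustrated in \autoref{fig:1}, is indeed what makes the second equality look harder. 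In short: your analysis correctly explains why the conjecture is open, but it does not resolve it, and the paper does not claim otherwise.
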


\section*{Acknowledgment}
Bojan Mohar is supported in part by the NSERC Discovery Grant R611450 (Canada),
and by the Research Project J1-2452 of ARRS (Slovenia).

\bibliographystyle{plainurl}
\bibliography{subdivision_references.bib}

\begin{thebibliography}{10}

\bibitem{Godsil_1993}
C.~D. Godsil.
\newblock {\em Algebraic combinatorics}.
\newblock Chapman and Hall Mathematics Series. Chapman \& Hall, New York, 1993.

\bibitem{Guo_2008}
Ji-Ming Guo.
\newblock On limit points of {L}aplacian spectral radii of graphs.
\newblock {\em Linear Algebra Appl.}, 429(7):1705--1718, 2008.
\newblock \href {http://dx.doi.org/10.1016/j.laa.2008.05.008}
  {\path{doi:10.1016/j.laa.2008.05.008}}.

\bibitem{Haiman_2022}
Milan Haiman, Carl Schildkraut, Shengtong Zhang, and Yufei Zhao.
\newblock Graphs with high second eigenvalue multiplicity.
\newblock {\em Bull. Lond. Math. Soc.}, 54(5):1630--1652, 2022.
\newblock \href {http://dx.doi.org/10.1112/blms.12647}
  {\path{doi:10.1112/blms.12647}}.

\bibitem{Hoffman_1972}
Alan~J. Hoffman.
\newblock On limit points of spectral radii of non-negative symmetric integral
  matrices.
\newblock In {\em Graph theory and applications ({P}roc. {C}onf., {W}estern
  {M}ichigan {U}niv., {K}alamazoo, {M}ich., 1972; dedicated to the memory of
  {J}. {W}. {T}. {Y}oungs)}, Lecture Notes in Math., Vol. 303, pages 165--172.
  Springer, Berlin, 1972.

\bibitem{hoffmann1974spectral}
Alan~J. Hoffman and John~Howard Smith.
\newblock On the spectral radii of topologically equivalent graphs.
\newblock In {\em Recent advances in graph theory ({P}roc. {S}econd
  {C}zechoslovak {S}ympos., {P}rague, 1974)}, pages 273--281. Academia, Prague,
  1975.

\bibitem{Horn_Johnson_2013}
Roger~A. Horn and Charles~R. Johnson.
\newblock {\em Matrix analysis}.
\newblock Cambridge University Press, Cambridge, second edition, 2013.

\bibitem{Jiang_2021}
Zilin Jiang, Jonathan Tidor, Yuan Yao, Shengtong Zhang, and Yufei Zhao.
\newblock {Equiangular lines with a fixed angle}.
\newblock {\em Annals of Mathematics}, 194(3):729 -- 743, 2021.
\newblock \href {http://dx.doi.org/10.4007/annals.2021.194.3.3}
  {\path{doi:10.4007/annals.2021.194.3.3}}.

\bibitem{Mohar_1982}
Bojan Mohar.
\newblock The spectrum of an infinite graph.
\newblock {\em Linear Algebra Appl.}, 48:245--256, 1982.
\newblock \href {http://dx.doi.org/10.1016/0024-3795(82)90111-2}
  {\path{doi:10.1016/0024-3795(82)90111-2}}.

\bibitem{Oboudi_2018}
Mohammad~Reza Oboudi.
\newblock On the eigenvalues and spectral radius of starlike trees.
\newblock {\em Aequationes Math.}, 92(4):683--694, 2018.
\newblock \href {http://dx.doi.org/10.1007/s00010-017-0533-4}
  {\path{doi:10.1007/s00010-017-0533-4}}.

\bibitem{Tobin_2018}
Michael Tait and Josh Tobin.
\newblock Characterizing graphs of maximum principal ratio.
\newblock {\em Electron. J. Linear Algebra}, 34:61--70, 2018.
\newblock \href {http://dx.doi.org/10.13001/1081-3810.3200}
  {\path{doi:10.13001/1081-3810.3200}}.

\bibitem{Venkatesan_2019}
R.~Venkatesan, P.~Elakkiya, and D.~Rini Dominic.
\newblock The eigenvalues of a graph under edge operations.
\newblock {\em AIP Conference Proceedings}, 2112(1):020137, 2019.
\newblock \href {http://dx.doi.org/10.1063/1.5112322}
  {\path{doi:10.1063/1.5112322}}.

\bibitem{Xue_2020}
Jie Xue, Ruifang Liu, and Jinlong Shu.
\newblock Unimodality of principal eigenvector and its applications.
\newblock {\em Graphs Combin.}, 36(4):1177--1188, 2020.
\newblock \href {http://dx.doi.org/10.1007/s00373-020-02178-5}
  {\path{doi:10.1007/s00373-020-02178-5}}.

\bibitem{Zhang_2006}
Fuji Zhang and Zhibo Chen.
\newblock Limit points of eigenvalues of (di)graphs.
\newblock {\em Czechoslovak Math. J.}, 56(131)(3):895--902, 2006.
\newblock \href {http://dx.doi.org/10.1007/s10587-006-0064-y}
  {\path{doi:10.1007/s10587-006-0064-y}}.

\end{thebibliography}

\pagebreak

\section*{Appendix}
For the sake of completeness, we give a proof of \autoref{thm:main2} here. This proof mimics the proof of \autoref{thm:main1}. 

Let $n=|G_{2t+2}|.$ Then $|H_t|=n-|S|.$ For any fixed $k$, $\lambda_{n-k+1}(G_{2t+2})\le  \lambda_{n-|S|-k+1}(H_t)$ for all $t$, by Interlacing Theorem. In the following lemmas we show that $\lambda_{n-|S|-k+1}(H_t)$ cannot be much bigger than $\lambda_{n-k+1}(G_{2t+2})$ for large $t$.
\begin{lemma}\label{lem:d1} Let $k\in \mathbb{N}$ and $\eta\in \mathbb{R}^+$. Suppose $\{t_\ell\}_{\ell\in \mathbb{N}}$ is a sequence of natural numbers such that \[\lambda_{n-k+1}(G_{2t_\ell+2})\le -2-\eta\] for all $\ell$. Then for every $\varepsilon>0$ there exists $\ell_0$ such that for all $\ell\ge \ell_0$ we have
\[ \lambda_{n-|S|-k+1}(H_{t_\ell})\le -2-\eta +\varepsilon.\]
\end{lemma}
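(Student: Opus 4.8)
The plan is to mimic the proof of \autoref{lem:c1}, but working with the bottom of the spectrum instead of the top and using the min–max (rather than max–min) form of the Courant–Fischer theorem. First I would set up notation exactly as in the Appendix statement: write $n=|G_{2t+2}|$, so $|H_t|=n-|S|$, and for each $i$ let $x^{(i)}=(x^{(i)}_v)_{v\in V(G_{2t+2})}$ be a normalized $\lambda_{n-i+1}(G_{2t+2})$-eigenvector. For an edge $uv\in S$ let $a,b,c$ be the three middle vertices of the $t$-stretch of $uv$ (with $b$ the center), and let $H_t'$ be the disjoint union of $H_t$ with $|S|$ copies of $K_1$, so that $A(H_t')$ is obtained from $A(G_{2t+2})$ by deleting exactly the edges $ab$ and $bc$ for every $uv\in S$.

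The core computation is the same as in \autoref{lem:c1}: for $1\le i,j\le k$,
\[
\langle x^{(i)}, A(H_t') x^{(j)}\rangle
= \delta_{ij}\lambda_{n-i+1}(G_{2t+2})
- \sum_{uv\in S}\bigl(x^{(i)}_a x^{(j)}_b + x^{(i)}_b x^{(j)}_a + x^{(i)}_c x^{(j)}_b + x^{(i)}_b x^{(j)}_c\bigr).
\]
Now, since $|\lambda_{n-i+1}(G_{2t_\ell+2})|\ge 2+\eta>2$ for all $i\le k$, each middle path of the $t$-stretch is an internal path and \autoref{lemma:decay}$(a)$ applies to the eigenvector $x^{(i)}$; hence $|x^{(i)}_a|+|x^{(i)}_b|+|x^{(i)}_c|<\varepsilon$ for all sufficiently large $t$ in the sequence $\{t_\ell\}$. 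After assuming $\varepsilon\le (k^2|S|)^{-1}$, the error term is bounded in absolute value by $|S|\varepsilon^2$, so
\[
\langle x^{(i)}, A(H_t') x^{(j)}\rangle \le \delta_{ij}\lambda_{n-i+1}(G_{2t+2}) + |S|\varepsilon^2.
\]
(The inequality direction is flipped relative to \autoref{lem:c1} because we now want an upper bound.) Letting $W$ be the span of $x^{(1)},\dots,x^{(k)}$ and taking any unit vector $z=\sum_{i=1}^k c_i x^{(i)}$ with $\sum c_i^2=1$, the same expansion gives
\[
\langle z, A(H_t') z\rangle
\le \sum_{i=1}^k c_i^2\,\lambda_{n-i+1}(G_{2t+2}) + \Bigl(\sum_{i=1}^k |c_i|\Bigr)^2 |S|\varepsilon^2
\le \lambda_{n-k+1}(G_{2t+2}) + k^2|S|\varepsilon^2,
\]
using that $\lambda_{n-i+1}(G_{2t+2})\le \lambda_{n-k+1}(G_{2t+2})$ for $i\le k$. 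Since $W$ has dimension $k$, by the min–max form of Courant–Fischer applied to $H_t'$ (whose $(n-|S|-k+1)$-st eigenvalue equals that of $H_t$, the $|S|$ extra zero eigenvalues lying in $[-2,2]$ and not affecting the count near $-2-\eta$ — this needs a one-line check that the $|S|$ isolated vertices contribute eigenvalue $0$ which does not interfere),
\[
\lambda_{n-|S|-k+1}(H_t) = \lambda_{n-|S|-k+1}(H_t')
\le \max_{z\in W,\ \|z\|=1}\langle z, A(H_t') z\rangle
\le \lambda_{n-k+1}(G_{2t+2}) + k^2|S|\varepsilon^2
\le -2-\eta+\varepsilon,
\]
which is the claimed bound once $\ell$ is large enough that the decay estimate holds for $t_\ell$.

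I expect the only genuinely delicate point to be the bookkeeping around passing from $H_t'$ to $H_t$ in the eigenvalue indices: one must confirm that the $|S|$ added isolated vertices shift the spectrum by inserting $|S|$ zeros, so that $\lambda_{n-|S|-k+1}(H_t')=\lambda_{n-|S|-k+1}(H_t)$ (the bottom $k$ eigenvalues of $H_t$, which are near $-2-\eta<0$, are unaffected by the zeros). This is exactly the analogue of the identity $\lambda_k(H_t)=\lambda_k(H_t')$ used in \autoref{lem:c1}, and it goes through verbatim by the same reasoning. Everything else is a sign-flipped transcription of the proof of \autoref{lem:c1}, and the analogue of \autoref{lem:c2} (the version with a fixed threshold $t_0$ rather than a subsequence) follows immediately from the same argument, exactly as \autoref{lem:c2} follows from \autoref{lem:c1}.
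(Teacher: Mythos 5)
Your proposal is essentially the paper's own proof: same auxiliary graph $H_t'=H_t\cup |S|K_1$, same bilinear estimates $\langle x^{(i)},A(H_t')x^{(j)}\rangle\le \delta_{ij}\lambda_i+|S|\varepsilon^2$ via \autoref{lemma:decay}$(a)$, same $k$-dimensional subspace $W$, and the same min--max application of Courant--Fischer, so the approach matches exactly. The only flaw is an indexing slip in the final chain: with $\dim W=k$, Courant--Fischer on the $n$-vertex graph $H_t'$ bounds $\lambda_{n-k+1}(H_t')$ (its $k$-th smallest eigenvalue), not $\lambda_{n-|S|-k+1}(H_t')$ (its $(|S|+k)$-th smallest), and the identity to check is $\lambda_{n-|S|-k+1}(H_t)=\lambda_{n-k+1}(H_t')$ rather than the equality you wrote, which is false in general. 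Your parenthetical justification is the right one for the corrected identity --- the $|S|$ zero eigenvalues of the isolated vertices do not enter the bottom $k$ of the spectrum, since for large $t$ the $k$ smallest eigenvalues of $H_t$ are negative (interlacing with an induced path $P_t$) --- so after replacing the index $n-|S|-k+1$ by $n-k+1$ for $H_t'$, the argument coincides with the one given in the Appendix.
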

\begin{proof} Let $x^{(i)}=(x^{(i)}_v)_{v\in V(G_{2t+2})}$ be a normalized eigenvector for $\lambda_i(G_{2t+2})$. For each edge $uv$ in $S$, let $a,b$ be the middle vertices in the $t$-stretch of $uv$. Define $H_t'$ as the disjoint union of $H_t$ and $|S|$ many copies of $K_1$. Then for $n-k+1 \le i\le j\le n$ we have
\begin{align*}
     \langle x^{(i)}, A(H_t') x^{(j)} \rangle &= \langle x^{(i)}, A(G_{2t+2}) x^{(j)} \rangle - \sum_{uv\in S}\Bigl( x^{(i)}_ax^{(j)}_b + x^{(i)}_bx^{(j)}_a +  x^{(i)}_cx^{(j)}_b + x^{(i)}_bx^{(j)}_c \Bigr)\\
     &\le \delta_{ij} \lambda_i(G_{2t+2}) + \sum_{uv\in S}\Bigl(|x^{(i)}_a|+|x^{(i)}_b|+|x^{(i)}_c|\Bigr) \Bigl(|x^{(j)}_a| + |x^{(j)}_b|+|x^{(j)}_c| \Bigr).
\end{align*}
We may assume that $\varepsilon\le (k^2|S|)^{-1}$. By \autoref{lemma:decay}$(a)$, $|x^{(i)}_a|+|x^{(i)}_b|+|x^{(i)}_c|<\varepsilon$ for all large $t$ in our sequence, since $\lambda_i(G_{2t_\ell+2})<-2-\eta$, for all $i\ge n-k+1$. Thus we have 
\[\langle x^{(i)}, A(H_t') x^{(j)} \rangle\le \delta_{ij} \lambda_i(G_{2t+2}) + |S| \varepsilon^2.\]

Now let $W$ be the subspace spanned by $x^{(n-k+1)},\ldots,x^{(n)}$. Let 
\[z=\sum_{i=n-k+1}^n c_i x^{(i)},\quad \sum_{i=n-k+1}^n c_i^2=1.\]
Then,
\begin{align*}
    \langle z, A(H_t')z \rangle &= \sum_{i=n-k+1}^n \sum_{j=n-k+1}^n c_i c_j \langle x^{(i)}, A(H_t') x^{(j)} \rangle \\
    &\le \sum_{i=n-k+1}^n c_i^2 \lambda_i(G_{2t+2}) + \sum_{i=n-k+1}^n \sum_{j=n-k+1}^n |c_i||c_j ||S| \varepsilon^2\\
    &\le \lambda_{n-k+1}(G_{2t+2}) + \left(\sum_{i=n-k+1}^n |c_i|\right)^2 |S| \varepsilon^2\\
    &\le \lambda_{n-k+1}(G_{2t+2}) + k^2 |S| \varepsilon^2.
\end{align*}
Thus by Courant-Fischer Theorem,
\begin{align*}
    \lambda_{n-|S|-k+1}(H_t) &= \lambda_{n-k+1}(H_t')\\
    &\le \max_{\substack{z\in W\\||z||=1 }}\langle z, A(H_t') z\rangle\\
    &\le \lambda_{n-k+1}(G_{2t+2}) + k^2|S| \varepsilon^2\le -2-\eta +\varepsilon.
\end{align*}
This completes the proof.
\end{proof}

The following is immediate from the proof of \autoref{lem:d1}.
\begin{lemma}\label{lem:d2} Let $k\in \mathbb{N}$. Suppose there exist positive constants $t_0$ and $\eta$ such that 
\[ \lambda_{n-k+1}(G_{2t+2})\le -2-\eta\]
for all $t\geq t_0$. Then for any $\varepsilon > 0$ there exists $t_1$ such that whenever $t\geq t_1,$ 
\[ \lambda_{n-|S|-k+1}(H_t)\le \lambda_k(G_{2t+2})+\varepsilon. \]
\end{lemma}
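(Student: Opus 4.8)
The plan is to unwind the proof of \autoref{lem:d1}: that argument in fact produces the sharper intermediate bound
\[\lambda_{n-|S|-k+1}(H_t)\le \lambda_{n-k+1}(G_{2t+2})+k^2|S|\varepsilon^2\]
for all sufficiently large $t$, and only in its final line specializes this to $-2-\eta+\varepsilon$ by invoking the hypothesis $\lambda_{n-k+1}(G_{2t+2})\le -2-\eta$. Since the hypothesis of the present lemma is exactly $\lambda_{n-k+1}(G_{2t+2})\le -2-\eta$ for all $t\ge t_0$ — a genuine tail, not merely a subsequence — I can rerun that computation essentially verbatim. (Here the right-hand side of the stated conclusion should read $\lambda_{n-k+1}(G_{2t+2})$, in analogy with \autoref{lem:c2}.)

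First I would observe that for every index $i$ with $n-k+1\le i\le n$ and every $t\ge t_0$ we have $\lambda_i(G_{2t+2})\le \lambda_{n-k+1}(G_{2t+2})\le -2-\eta$, so each such eigenvalue lies below $-2$ with a uniform gap $\eta$. This is precisely the input needed for the Exponential Decay Lemma (\autoref{lemma:decay}$(a)$): for any prescribed $\delta>0$, once $t$ is large enough the entries $x^{(i)}_a,x^{(i)}_b,x^{(i)}_c$ of a normalized $\lambda_i$-eigenvector at the three middle vertices of each $t$-stretch of each edge of $S$ satisfy $|x^{(i)}_a|+|x^{(i)}_b|+|x^{(i)}_c|<\delta$, simultaneously for all $i\ge n-k+1$ (finitely many indices) and all $uv\in S$ (finitely many edges).

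Given $\varepsilon>0$ I would then pick $\delta\le (k^2|S|)^{-1}$ small enough that $k^2|S|\delta^2<\varepsilon$, choose $t_1\ge t_0$ beyond which the decay estimate holds with this $\delta$, and repeat the estimate of $\langle x^{(i)},A(H_t')x^{(j)}\rangle$ from \autoref{lem:d1} with $\delta$ in the role of its $\varepsilon$: expand in terms of $\langle x^{(i)},A(G_{2t+2})x^{(j)}\rangle$ and the four deleted-edge cross terms, bound those terms via the decay estimate, and then apply Courant-Fischer with the test subspace spanned by $x^{(n-k+1)},\dots,x^{(n)}$ together with the identity $\lambda_{n-|S|-k+1}(H_t)=\lambda_{n-k+1}(H_t')$. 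This yields $\lambda_{n-|S|-k+1}(H_t)\le \lambda_{n-k+1}(G_{2t+2})+k^2|S|\delta^2<\lambda_{n-k+1}(G_{2t+2})+\varepsilon$ for all $t\ge t_1$, as required. There is essentially no obstacle here; the only points requiring a moment's care are the harmless relabeling (the decay estimate is uniform over any set of $t$ on which the gap holds, so a tail behaves exactly like the subsequence in \autoref{lem:d1}) and the bookkeeping of the three middle vertices $a,b,c$ and the index shift $n\mapsto n-|S|$, both of which are identical to those in \autoref{lem:d1}.
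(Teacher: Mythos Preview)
Your proposal is correct and matches the paper's own treatment exactly: the paper simply states that \autoref{lem:d2} is ``immediate from the proof of \autoref{lem:d1}'', and you have spelled out precisely that argument, including the observation that the penultimate line of the proof of \autoref{lem:d1} already gives the sharper bound $\lambda_{n-|S|-k+1}(H_t)\le \lambda_{n-k+1}(G_{2t+2})+k^2|S|\varepsilon^2$. Your remark that the right-hand side of the stated conclusion should read $\lambda_{n-k+1}(G_{2t+2})$ rather than $\lambda_k(G_{2t+2})$ is also correct; this is a typo in the paper.
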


We now prove \autoref{thm:main2}.
\begin{proof}[Proof of \autoref{thm:main2}]
We first show that 
\begin{equation}\label{eq:2evensequence}
    \lim_{t\to \infty}\lambda_{n-k+1}(G_{2t+2})=\lim_{t\to \infty}\lambda_{n-|S|-k+1}(H_t).
\end{equation}
Note that $\{H_t\}_{t\in \mathbb{N}}$ is a sequence of graphs with bounded maximum degree and that $H_t$ is an induced subgraph of $H_{t+1}$. Therefore, $\lambda_{n-|S|-k+1}(H_t)$ is a decreasing bounded sequence, hence Cauchy. Since the path $P_t$ is an induced subgraph of $H_t$, we have that $\lambda_{n-|S|-k+1}(H_t)\le \lambda_{t-k+1}(P_t)$ for all $t$ by Interlacing Theorem. Thus, 
\[\lim_{t\to \infty}\lambda_{n-|S|-k+1}(H_t)\leq \lim_{t\to \infty}\lambda_{t-k+1}(P_t)=-2.\]

First suppose that $\lim_{t\to \infty}\lambda_{n-|S|-k+1}(H_{t})=-2$. By \autoref{lem:d1} there exists a non-negative decreasing function $f(t)$ such that for all large $t$ we have $\lambda_{n-k+1}(G_{2t+2})\geq -2-f(t)$ and $f(t)\to 0$ as $t\to \infty$.  But $\lambda_{n-k+1}(G_{2t+2})\leq \lambda_{n-|S|-k+1}(H_t)$ for all $t$ and thus by Sandwich Theorem for limits we get $\lim_{t\to \infty}\lambda_{n-k+1}(G_{2t+2})=-2$.

Now suppose that $\lim_{t\to \infty}\lambda_{n-|S|-k+1}(H_{t})=-2-2\eta$ where $\eta$ is some positive constant. Then there exists $t_0$ such that for all $t\geq t_0$ we have $\lambda_{n-k+1}(G_{2t+2})\leq \lambda_{n-|S|-k+1}(H_{t})<-2-\eta$. Now for any $\varepsilon>0$, by \autoref{lem:d2}, we can find $t_1$ such that for $t\geq t_1,$
\[ |\lambda_{n-k+1}(G_{2t+2})-\lambda_{n-|S|-k+1}(H_{t})|<\varepsilon.\]
Thus $\lim_{t\to \infty}\lambda_{n-k+1}(G_{2t+2})=-2-2\eta$. This proves equality \eqref{eq:2evensequence}.

Now it remains to show that 
\begin{equation}\label{eq:2oddsequence}
\lim_{t\to \infty}\lambda_{|G_{2t+3}|-k+1}(G_{2t+3})=\lim_{t\to \infty}\lambda_{n-|S|-k+1}(H_t). 
\end{equation}
Let $G'$ be the graph obtained from $G$ by subdividing each edge $uv\in S$ once; let $w$ be the vertex of subdivision. Let $S'$ be the subset of edges of $G'$ that consists of $uw$ for each $uv\in S$. Let $H_t' = H_t(S')$ be the graph constructed from $G'$. By our discussion above we have 
\begin{equation}\label{eq:2odd1}
\lim_{t\to \infty}\lambda_{|G'_{2t+2}|-k+1}(G'_{2t+2})=\lim_{t\to \infty}\lambda_{|H_t'|-k+1}(H_t').    
\end{equation}
Now observe that $H_t$ is an induced subgraph of $H_t'$, and $H_t'$ is an induced subgraph of $H_{t+1}$. By Interlacing Theorem,
\begin{equation}\label{eq:2odd2}
\lambda_{|H_{t+1}|-k+1}(H_{t+1})\le \lambda_{|H_t'|-k+1}(H_t')\le \lambda_{|H_t|-k+1}(H_t).
\end{equation}
Equality \eqref{eq:2oddsequence} now follows from \eqref{eq:2odd1}, \eqref{eq:2odd2} and the fact that $G_{2t+2}'=G_{2t+3}$. This completes the proof.
\end{proof}
\end{document}